\newtheorem{theorem}{Theorem}[section]
\newtheorem{lemma}[theorem]{Lemma}
\newtheorem{proposition}[theorem]{Proposition}
\newtheorem{definition}[theorem]{Definition}
\numberwithin{equation}{section}
\newcommand{\bdot}{{\tiny{\bullet}}}
\newcommand{\twiddle}[1]{\widetilde{#1}}
\newcommand{\gives}{\rightarrow}
\newcommand{\norm}[1]{\left|\left|#1\right|\right|}
\newcommand{\lr}[1]{\left(#1\right)}
\newcommand{\abs}[1]{\left|#1\right|}
\newcommand{\set}[1]{\left\{#1\right\}}
\newcommand{\E}[1]{\mathbb E\left[#1\right]}
\newcommand{\inprod}[2]{\left \langle #1,#2\right\rangle }
\def\Cov{\mathrm{Cov}} 
\newcommand{\x}{\times}
\newcommand{\R}{\mathbb R}
\newcommand{\Var}[1]{\mathrm{Var}\left[#1\right]}
\newcommand{\twomat}[4]{\ensuremath{ \left(\begin{array}{cc} #1 & #2 \\
#3 & #4 \end{array}\right)}}
\newcommand{\mN}{\mathcal N}
\newcommand{\z}[3]{z_{#1;#2}^{(#3)}}
\newcommand{\mF}{\mathcal F}
\newcommand{\mO}{\mathcal O}
\newcommand{\Prob}[1]{\mathbb P \left(#1\right)}
\begin{document}

%\small

\title{Random Neural Networks in the Infinite Width Limit as Gaussian Processes}

\author{Boris Hanin\\
Department of Operations Research and Financial Engineering\\
Princeton University}

\maketitle
\begin{abstract}
    This article gives a new proof that fully connected neural networks with random weights and biases converge to Gaussian processes in the regime where the input dimension, output dimension, and depth are kept fixed, while the hidden layer widths tend to infinity. Unlike prior work, convergence is shown assuming only moment conditions for the distribution of weights and for quite general non-linearities. 
\end{abstract}

\section{Introduction}
In the last decade or so neural networks, originally introduced in the 1940's and 50's \cite{hebb1949organization,rosenblatt1958perceptron}, have become indispensable tools for machine learning tasks ranging from computer vision \cite{krizhevsky2012imagenet} to natural language processing \cite{brown2020language} and reinforcement learning \cite{silver2017mastering}. Their empirical success has raised many new mathematical questions in approximation theory \cite{devore2020neural, yarotsky2017error,yarotsky2018optimal}, probability (see \S \ref{S:RMT} for some references), optimization/learning theory \cite{bartlett2020benign, belkin2019reconciling,jacot2018neural,zhang2021understanding} and so on. The present article concerns a fundamental probabilistic question about arguably the simplest networks, the so-called \textit{fully connected} neural networks,  defined as follows:
\begin{definition}[Fully Connected Network]\label{D:FC}
Fix a positive integer $L$ as well as $L+2$ positive integers $n_0,\ldots, n_{L+1}$ and a function $\sigma:\R\gives \R$. A fully connected depth $L$ neural network with input dimension $n_0$, output dimension $n_{L+1}$, hidden layer widths $n_1,\ldots, n_L$, and non-linearity $\sigma$ is any function  $x_\alpha\in \R^{n_0}\mapsto z_\alpha^{(L+1)}\in \R^{n_{L+1}}$ of the following form
\[
z_\alpha^{(\ell)} = \begin{cases}
W^{(1)}x_\alpha+b^{(1)},&\quad \ell=1\\
W^{(\ell)}\sigma(z_\alpha^{(\ell-1)})+b^{(\ell)},&\quad \ell=2,\ldots, L+1
\end{cases},
\]
where $W^{(\ell)}\in \R^{n_{\ell}\times n_{\ell-1}}$ are matrices, $b^{(\ell)}\in \R^{n_\ell}$ are vectors, and $\sigma$ applied to a vector is shorthand for $\sigma$ applied to each component. 
\end{definition}
The parameters $L,n_0,\ldots,n_{L+1}$ are called the \textit{network architecture}, and  $z_\alpha^{(\ell)}\in \R^{n_\ell}$ is called the \textit{vector of pre-activations at layer $\ell$} corresponding to input $x_\alpha.$ A fully connected network with a fixed architecture and given non-linearity $\sigma$ is therefore a finite (but typically high) dimensional family of functions, parameterized by the network weights (entries of the weight matrices $W^{(\ell)}$) and biases (components of bias vectors $b^{(\ell)}$). 

This article considers the mapping $x_\alpha\mapsto z_\alpha^{(L+1)}$ when the network's weights and biases are chosen independently at random and the hidden layer widths $n_1,\ldots, n_L$ are sent to infinity while the input dimension $n_0,$ output dimension $n_{L+1}$, and network depth $L$ are fixed. In this \textit{infinite width limit}, akin to the large matrix limit in random matrix theory (see \S \ref{S:why}), neural networks with random weights and biases converge to Gaussian processes (see \S \ref{S:disc} for a review of prior work). Unlike prior work Theorem \ref{T:NNGP}, our main result, is that this holds for general non-linearities $\sigma$ and distributions of network weights (cf \S \ref{S:main-result}). 

Moreover, in addition to establishing convergence of wide neural networks to a Gaussian process under weak hypotheses, the present article gives a mathematical take aimed at probabilists of some of the ideas developed in the recent monograph \cite{roberts2021principles}. This book, written in the language and style of theoretical physics by Roberts and Yaida, is based on research done jointly with the author. It represents a far-reaching development of the breakthrough work of Yaida \cite{yaida2020non}, which was the first to systematically explain how to compute \textit{finite width corrections} to infinite width Gaussian process limit of random neural networks for arbitrary depth, width, and non-linearity. Previously, such finite width (and large depth) corrections were only possible for some special observables in linear and ReLU networks \cite{ hanin2018neural,hanin2019finite,hanin2019products,hanin2021non, noci2021precise, zavatone2021exact}. The present article deals only with the asymptotic analysis of random neural networks as the width tends to infinity, leaving to future work a probabilistic elaboration of the some aspects of the approach to finite width corrections from \cite{roberts2021principles}. 

\subsection{Roadmap} The rest of this article is organized as follows. First, in \S \ref{S:why} we briefly motivate the study of neural networks with random weights. Then, in \S \ref{S:main-result} we formulate our main result, Theorem \ref{T:NNGP}. Before giving its proof in \S \ref{S:proof}, we first indicate in \S \ref{S:disc} the general idea of the proof and its relation to prior work.

\subsection{Why Random Neural Networks?}\label{S:why}
\subsubsection{Practical Motivations} It may seem at first glance that studying neural networks with random weights and biases is of no practical interest. After all, a neural network is only useful after it has been ``trained,'' i.e. one has found a setting of its parameters so that the resulting network function (at least approximately) interpolates a given training dataset of input-output pairs $(x,f(x))$ for an otherwise unknown function $f:\R^{n_0}\gives \R^{n_{L+1}}$. 

However, the vast majority of neural network training algorithms used in practice are variants of gradient descent starting from \textit{a random initialization} of the weight matrices $W^{(\ell)}$ and bias vectors $b^{(\ell)}$. Studying networks with random weights and biases therefore provides an understanding of the initial conditions for neural network optimization. 

Beyond illuminating the properties of networks at the start of training, the analysis of random neural networks can reveal a great deal about networks after training as well. Indeed, on a  heuristic level, just as the behavior of the level spacings of the eigenvalues of large random matrices is a surprisingly good match for emission spectra of heavy atoms \cite{wigner1958distribution}, it is not unreasonable to believe that certain coarse properties of the incredibly complex networks used in practice will be similar to those of networks with random weights and biases. More rigorously, neural networks used in practice often have many more tunable parameters (weights and biases) than the number of datapoints from the training dataset. Thus, at least in certain regimes, neural network training provably proceeds by an approximate linearization around initialization, since no one parameter needs to move much to fit the data. This so-called NTK analysis \cite{du2018gradient, fan2020spectra,huang2020dynamics,jacot2018neural,liu2020linearity} shows, with several important caveats related to network size and initialization scheme, that in some cases the statistical properties of neural networks at the start of training are the key determinants of their behavior throughout training.

\subsubsection{Motivation from Random Matrix Theory}\label{S:RMT} In addition to being of practical importance, random neural networks are also fascinating mathematical objects, giving rise to new problems in approximation theory \cite{daubechies2021nonlinear,devore2020neural,hanin2019universal,yarotsky2017error, yarotsky2018optimal}, random geometry \cite{hanin2019complexity,hanin2019deep}, and random matrix theory (RMT). Perhaps the most direct, though by no means only, connection to RMT questions is to set the network biases $b^{(\ell)}$ to zero and consider the very special case when $\sigma(t)=t$ is the identity (in the machine learning literature these are called deep linear networks). The network function
\begin{equation}\label{E:deep-linear}
z_\alpha^{(L+1)} = W^{(L+1)}\cdots W^{(1)}x_\alpha 
\end{equation}
is then a linear statistic for a product of $L+1$ independent random matrices. Such matrix models have been extensively studied, primarily in two regimes. The first is the multiplicative ergodic theorem regime \cite{crisanti2012products,furstenberg1963noncommuting,furstenberg1960products,ruelle1979ergodic}, in which all the layer widths $n_0,\ldots, n_{L+1}$ are typically set to a fixed value $n$ and the network depth $L$ tends to infinity. The second regime, where $L$ is fixed and the layer widths $n_\ell$ (i.e. matrix dimensions) tend to infinity, is the purview of free-probability \cite{nica2006lectures,voiculescu1986addition}.

In the presence of a  non-linearity $\sigma$, random neural network provide non-linear generalizations of the usual RMT questions. For instance, the questions taken up in this article are analogs of the joint normality of linear statistics of random matrix products in the free probability regime. Further, random neural networks give additional motivation for studying matrix products appearing in \eqref{E:deep-linear} when the matrix dimensions $n_\ell$ and the number of terms $L$ are simultaneously large. This  double scaling limit reveals new phenomena \cite{ahn2019fluctuations,akemann2012universal,akemann2014universal, akemann2019integrable,gorin2018gaussian,hanin2019products,hanin2021non} but is so far poorly understood relative to the ergodic or free regimes. 

Finally, beyond studying linear networks, random matrix theory questions naturally appear in neural network theory via non-linear analogs of the Marchenko-Pastur distribution for empirical covariance matrices of $z_\alpha^{(L+1)}$ when $\alpha\in A$ ranges over a random dataset of inputs \cite{adlam2019random,hastie2019surprises, peche2019note, pennington2019nonlinear} as well as through the spectrum of the input-output Jacobian \cite{hanin2019products, pennington2018emergence} and the NTK \cite{adlam2020neural,fan2020spectra}.

\subsection{Main Result}\label{S:main-result}
Our main result shows that under rather general conditions, when the weights $W^{(\ell)}$ and biases $b^{(\ell)}$ of a fully connected network are chosen at random, the resulting field $x_\alpha\mapsto z_\alpha^{(L+1)}$ converges to a centered Gaussian field with iid components when the input dimension $n_0$ and output dimension $n_{L+1}$ are held fixed but the hidden layer widths $n_1,\ldots, n_L$ tend to infinity. To give the precise statement in Theorem \ref{T:NNGP} below, fix a fully connected neural network with depth $L\geq 1$, input dimension $n_0$, output dimension $n_{L+1}$, hidden layer widths $n_1,\ldots, n_{L}\geq 1$, and non-linearity $\sigma:\R\gives \R$. We assume that $\sigma$ is absolutely continuous and that its almost-everywhere defined derivative (and hence $\sigma$ itself) is polynomially bounded:
\begin{equation}\label{E:sigma-prop}
\exists k>0\text{ s.t. }\forall x\in \R\quad \norm{\frac{\sigma'(x)}{1+\abs{x}^k}}_{L^\infty(\R)}< \infty.    
\end{equation}
All non-linearities used in practice satisfy these rather mild criteria. Further, let us write $W_{ij}^{(\ell)}$ for the entries of the weight matrices $W^{(\ell)}$ and $b_i^{(\ell)}$ for the components of the bias vectors $b^{(\ell)}$. For $\ell \geq 2$ the Definition \ref{D:FC} of fully connected networks means that the formula for the components of the pre-activations $z_\alpha^{(\ell)}$ at layer $\ell$ in terms of those for $z_\alpha^{(\ell-1)}$  reads
\begin{equation}\label{E:z-def}
\z{i}{\alpha}{\ell}:= b_i^{(\ell)}+\sum_{j=1}^{n_{\ell-1}}W_{ij}^{(\ell)}\sigma(\z{j}{\alpha}{\ell-1}),\qquad i=1,\ldots, n_{\ell},
\end{equation}
where we've denoted by  $\z{i}{\alpha}{\ell}$ the $i^{th}$ component of the $n_\ell$-dimensional vector of pre-activations $z_\alpha^{(\ell)}$ in layer $\ell$ corresponding to a network input $x_\alpha\in \R^{n_\ell}$. We make the following assumption on the network weights:
\begin{equation}\label{E:W-def}
    W_{ij}^{(\ell)}:=\lr{\frac{C_W}{n_{\ell-1}}}^{1/2}\widehat{W}_{ij}^{(\ell)},\qquad \widehat{W}_{ij}^{(\ell)}\sim \mu\quad \text{iid},
\end{equation}
where $\mu$ is a fixed probability distribution on $\R$ such that 
\begin{equation}\label{E:mu-W-def}
\mu\text{ has mean }0,\text{ variance }1\text{, and finite higher moments.}    
\end{equation}
We further assume the network biases are iid Gaussian\footnote{As explained in \S \ref{S:disc} the universality results in this article are simply not true if the biases are drawn iid from a fixed non-Gaussian distribution.} and independent of the weights:
\begin{equation}\label{E:b-def}
b_i^{(\ell)} \sim \mN(0,C_b)\quad \text{iid}.    
\end{equation}
In \eqref{E:W-def} and \eqref{E:b-def}, $C_W>0$ and $C_b\geq 0$ are fixed constants. These constants do not play an important role for the analysis in this article but will be crucial for followup work. With the network weights and biases chosen at random the vectors $z_\alpha^{(\ell)}$ are also random. Our main result is that, in the infinite width limit, they have independent Gaussian components. 
\begin{theorem}\label{T:NNGP}
Fix $n_0,n_{L+1}$ and a compact set $T\subseteq \R^{n_0}$. As the hidden layer widths $n_1,\ldots, n_L$ tend to infinity, the sequence of stochastic processes 
\[x_\alpha\in \R^{n_0}\quad \mapsto \quad  z_{\alpha}^{(L+1)}\in \R^{n_{L+1}}\]
converges weakly in $C^0(T,\R^{n_{L+1}})$ to a centered Gaussian process taking values in $\R^{n_{L+1}}$ with iid coordinates. The coordinate-wise covariance function  
\[
K_{\alpha\beta}^{(L+1)}:=\lim_{n_1,\ldots, n_L\gives\infty}\Cov\lr{\z{i}{\alpha}{L+1},\, \z{i}{\beta}{L+1}}
\]
for this limiting process satisfies the layerwise recursion 
\begin{equation}\label{E:K-rec}
    K_{\alpha\beta}^{(\ell+1)}=C_b+C_W\mathbb E\left[\sigma(z_\alpha)\sigma(z_\beta)\right],\qquad \lr{\begin{array}{c}
         z_\alpha  \\
         z_\beta 
    \end{array}}\sim \mN\lr{0,\twomat{K_{\alpha\alpha}^{(\ell)}}{K_{\alpha\beta}^{(\ell)}}{K_{\alpha\beta}^{(\ell)}}{K_{\beta\beta}^{(\ell)}}}
\end{equation}
for $\ell \geq 2$, with initial condition
\begin{equation}\label{E:K-initial}
    K_{\alpha\beta}^{(2)}=C_b+C_W\E{\sigma\lr{z_{1;\alpha}^{(1)}}\sigma\lr{z_{1;\beta}^{(1)}}},
\end{equation}
where the distribution of $(z_{1;\alpha}^{(1)}, z_{1;\beta}^{(1)})$ is determined via \eqref{E:z-def} by the distribution of weights and biases in the first layer and hence is not universal.
\end{theorem}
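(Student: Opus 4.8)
The plan is to split the claim into (i) convergence of the finite-dimensional distributions of the field $x_\alpha\mapsto z_\alpha^{(L+1)}$ to the Gaussian process with covariance \eqref{E:K-rec}--\eqref{E:K-initial}, and (ii) tightness of its laws in $C^0(T,\R^{n_{L+1}})$ uniformly in the hidden widths $n_1,\dots,n_L$; by Prokhorov's theorem these together yield weak convergence in $C^0(T,\R^{n_{L+1}})$. Both parts proceed by induction on the layer index, and both rest on the following structural fact, immediate from \eqref{E:z-def}--\eqref{E:b-def}: writing $\mF_{\ell}$ for the $\sigma$-algebra generated by the weights and biases of layers $1,\dots,\ell$, conditionally on $\mF_{\ell-1}$ the vectors $\big((\z{j}{\alpha}{\ell})_{\alpha\in S}\big)_{j\ge1}$ recorded over a fixed finite set $S\subseteq T$ are i.i.d.\ across the neuron index $j$; and conditionally on $\mF_\ell$ the coordinate fields $\z{i}{\bdot}{\ell+1}$ are, for distinct $i$, independent, each of the form $b_i^{(\ell+1)}\mathbf 1+\sum_{j=1}^{n_\ell}(C_W/n_\ell)^{1/2}\widehat W_{ij}^{(\ell+1)}(\sigma(\z{j}{\alpha}{\ell}))_{\alpha\in S}$ with the summands conditionally i.i.d., mean zero, and the bias independent and Gaussian.

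For (i), fix a finite $S\subseteq T$ and carry through the induction on $\ell\ge2$ two quantities: the random matrix
\[
\widehat\Sigma^{(\ell)}_{\alpha\beta}:=C_b+\frac{C_W}{n_{\ell-1}}\sum_{j=1}^{n_{\ell-1}}\sigma(\z{j}{\alpha}{\ell-1})\sigma(\z{j}{\beta}{\ell-1})=\Cov\!\lr{\z{i}{\alpha}{\ell},\,\z{i}{\beta}{\ell}\mid\mF_{\ell-1}},\qquad\alpha,\beta\in S,
\]
about which I claim $\widehat\Sigma^{(\ell)}_{\alpha\beta}\to K^{(\ell)}_{\alpha\beta}$ in $L^p$ for all $p$ as $n_1,\dots,n_{\ell-1}\to\infty$; and the joint law of any fixed finite set of coordinates of $z^{(\ell)}$ over $S$, which I claim converges to a centered Gaussian with i.i.d.\ coordinates and single-coordinate covariance $\big(K^{(\ell)}_{\alpha\beta}\big)_{\alpha,\beta\in S}$. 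The base case $\ell=2$ is a direct multivariate Lyapunov CLT: the pairs $(\widehat W^{(2)}_{ij},(\z{j}{\alpha}{1})_{\alpha\in S})_j$ are genuinely i.i.d.\ across $j$, so the sum in \eqref{E:z-def} converges to a centered Gaussian with covariance $C_W\E{\sigma(z_{1;\alpha}^{(1)})\sigma(z_{1;\beta}^{(1)})}$ entrywise, and adding the independent Gaussian bias produces $K^{(2)}$; likewise $\widehat\Sigma^{(2)}$ is a sample average of i.i.d.\ terms and converges by the law of large numbers. For the step $\ell\to\ell+1$, I apply the conditional CLT of the previous paragraph at layer $\ell$, together with uniform integrability, to obtain $\E{\sigma(\z{1}{\alpha}{\ell})\sigma(\z{1}{\beta}{\ell})\mid\mF_{\ell-1}}=g(\widehat\Sigma^{(\ell)})+o_p(1)$, where $g(\Sigma):=\E{\sigma(z_\alpha)\sigma(z_\beta)}$ for $(z_\alpha,z_\beta)\sim\mN(0,\Sigma)$; since $g$ is continuous on the positive semidefinite cone and $\widehat\Sigma^{(\ell)}\to K^{(\ell)}$ by the inductive hypothesis, the right side converges to $K^{(\ell+1)}_{\alpha\beta}$ by \eqref{E:K-rec}. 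A law of large numbers over $j$, applied conditionally on $\mF_{\ell-1}$ with a conditional-variance bound uniform in the widths, then upgrades this to $\widehat\Sigma^{(\ell+1)}\to K^{(\ell+1)}$; and feeding $\widehat\Sigma^{(\ell+1)}\to K^{(\ell+1)}$ into a conditional Lyapunov CLT for $z^{(\ell+1)}$ given $\mF_\ell$ --- using that convergence of conditional laws to a \emph{deterministic} limit forces convergence of the unconditional law, and the conditional independence of distinct coordinates --- gives the claimed Gaussianity of $z^{(\ell+1)}$ with i.i.d.\ coordinates. All the Lyapunov and uniform-integrability estimates needed along the way follow from the moment hypothesis \eqref{E:mu-W-def}, the Gaussian biases, the polynomial growth \eqref{E:sigma-prop}, and a uniform-in-widths bound $\sup\E{\abs{\z{i}{\alpha}{\ell}}^p}<\infty$ for every $p$, itself established by induction on $\ell$.

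For (ii), I will show that for every even integer $p$ there is a constant $C=C(p,L,T,\sigma,\mu,C_W,C_b)$, independent of $n_1,\dots,n_L$, with
\[
\E{\abs{\z{i}{\alpha}{L+1}-\z{i}{\beta}{L+1}}^{p}}\le C\,\abs{x_\alpha-x_\beta}^{p}\qquad\text{and}\qquad\E{\abs{\z{i}{\alpha}{L+1}}^{p}}\le C,\qquad\alpha,\beta\in T.
\]
Conditioning on $\mF_{\ell-1}$ and applying Rosenthal's inequality to the sum over $j$ in \eqref{E:z-def} bounds the $p$-th moment of $\z{i}{\alpha}{\ell}-\z{i}{\beta}{\ell}$ by moments of $\sigma(\z{j}{\alpha}{\ell-1})-\sigma(\z{j}{\beta}{\ell-1})$; by \eqref{E:sigma-prop} this increment is at most $\abs{\z{j}{\alpha}{\ell-1}-\z{j}{\beta}{\ell-1}}$ times a polynomial in $(\z{j}{\alpha}{\ell-1},\z{j}{\beta}{\ell-1})$, so Hölder's inequality and the inductive bounds close the recursion, the case $\ell=1$ being immediate from $z^{(1)}_\alpha=W^{(1)}x_\alpha+b^{(1)}$. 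Taking $p>n_0$, the Kolmogorov--Chentsov criterion gives tightness (indeed joint Hölder continuity of any subsequential limit) in $C^0(T,\R^{n_{L+1}})$, and together with (i) this completes the proof.

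The main obstacle is the inductive step of (i) for $\ell\ge3$. Because $\mu$ is not assumed Gaussian, the conditional law of $\z{1}{\bdot}{\ell}$ given $\mF_{\ell-1}$ is \emph{not} exactly Gaussian with covariance $\widehat\Sigma^{(\ell)}$, only asymptotically so, so $\E{\sigma(\z{1}{\alpha}{\ell})\sigma(\z{1}{\beta}{\ell})\mid\mF_{\ell-1}}$ cannot simply be replaced by a Gaussian integral. Making the two limiting mechanisms --- the conditional CLT inside layer $\ell$ and the concentration $\widehat\Sigma^{(\ell)}\to K^{(\ell)}$ coming from $n_1,\dots,n_{\ell-1}\to\infty$ --- interact correctly in the \emph{simultaneous} (rather than iterated) limit is where the uniform moment bounds from (ii) and the continuity of $g$ do the real work. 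This step is also exactly where Gaussianity of the biases is used: a non-Gaussian bias would pass through the CLT untouched and destroy Gaussianity of the limit, consistent with the failure of universality noted after \eqref{E:b-def}.
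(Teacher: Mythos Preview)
Your two-part plan is sound and, at the level of a sketch, correct; but it follows a genuinely different route from the paper in both halves, and it is worth seeing what each buys. For the finite-dimensional distributions, the paper does \emph{not} run a conditional Lyapunov CLT directly against the general weight law $\mu$. Instead it first proves the result assuming the weights in layers $\ell\geq 2$ are Gaussian --- in which case the conditional law of $z_{i;A}^{(\ell+1)}$ given $\mF_\ell$ is \emph{exactly} $\mN(0,\Sigma_A^{(\ell+1)})$, so that the whole problem reduces to a law of large numbers for the ``collective observable'' $\Sigma_A^{(\ell+1)}$ --- and only afterwards removes the Gaussian assumption by a layer-by-layer Lindeberg swap. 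This neatly sidesteps precisely the obstacle you flag at the end: one never has to argue that the conditional CLT and the concentration $\widehat\Sigma^{(\ell)}\to K^{(\ell)}$ interact well in the simultaneous limit, because in the Gaussian-weights step the conditional law is exactly Gaussian at every finite width. Your direct approach is more unified but forces you to verify a random Lyapunov/Lindeberg condition and conditional uniform integrability at each layer; note also a small slip --- conditional on $\mF_\ell$ the summands $\widehat W_{ij}^{(\ell+1)}\sigma(z_{j;\cdot}^{(\ell)})$ are independent but \emph{not} identically distributed (the vectors $\sigma(z_{j;\cdot}^{(\ell)})$ vary with $j$), so you genuinely need Lindeberg--Feller rather than an i.i.d.\ CLT. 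For tightness, the paper takes a quite different path: it proves high-probability equicontinuity and equiboundedness of $x_\alpha\mapsto z_\alpha^{(L+1)}$ on $T$ via a per-layer Lipschitz estimate established by a chaining argument on the image sets $T^{(\ell)}$, then invokes Arzel\`a--Ascoli. Your Rosenthal-plus-Kolmogorov--Chentsov route through the moment bound $\E{\abs{\z{i}{\alpha}{\ell}-\z{i}{\beta}{\ell}}^{p}}\le C\abs{x_\alpha-x_\beta}^{p}$ is more standard and arguably more elementary; it avoids chaining entirely, at the cost of needing to push arbitrarily high moments through the recursion (which the polynomial growth \eqref{E:sigma-prop} and the moment hypothesis \eqref{E:mu-W-def} do allow).
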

\noindent We prove Theorem \ref{T:NNGP} in \S \ref{S:proof}. First, we explain the main idea and review prior work.

\subsection{Theorem \ref{T:NNGP}: Discussion, Main Idea, and Relation to Prior Work}\label{S:disc}
At a high level, the proof of Theorem \ref{T:NNGP} (specifically the convergence of finite-dimensional distributions) proceeds as follows: 
\begin{enumerate}
    \item Conditional on the mapping $x_\alpha\mapsto z_\alpha^{(L)}$, the components of the neural network output $x_\alpha\mapsto z_\alpha^{(L+1)}$ are independent sums of $n_L$ independent random fields (see \eqref{E:z-def}), and hence, when $n_L$ is large, are each approximately Gaussian by the CLT. 
    \item The conditional covariance in the CLT from step $1$ is random at finite widths (it depends on $z_\alpha^{(L)}$). However, it has the special form of an average over $j=1,\ldots, n_L$ of the same function applied to each component $z_{j;\alpha}^{(L)}$ of the vector $z_\alpha^{(L)}$ of pre-activations at the last hidden layer. We call such objects \textit{collective observables} (see \S \ref{S:collective-proof} and \eqref{E:sigma-intro}). 
    \item While $z_{j;\alpha}^{(\ell)}$ are not independent at finite width when $\ell \geq 2$, they are weakly sufficiently correlated that a LLN still applies to any collective observable in the infinite width limit (see \S \ref{S:collective-proof}).
    \item The LLN from step 3 allows us to replace the random conditional covariance matrix from steps 1 and 2 by its expectation, asymptotically as $n_1,\ldots, n_L$ tend to infinity.
\end{enumerate}

We turn to giving a few more details on steps 1-4 and reviewing along the way the relation of the present article to prior work. The study of the infinite width limit for random neural networks dates back at least to  Neal \cite{neal1996priors}, who considered networks with one hidden layer:
\[
z_{i;\alpha}^{(2)} = b_i^{(2)} +\sum_{j=1}^{n_1}W_{ij}^{(2)}\sigma\lr{z_{j;\alpha}^{(1)}},\qquad z_{j;\alpha}^{(1)}=b_j^{(1)}+\sum_{k=1}^{n_0}W_{jk}^{(1)}x_{k;\alpha},
\]
where $i = 1,\ldots, n_2.$ In the shallow $L=1$ setting of Neal if in addition $n_2=1$, then neglecting the bias $b_1^{(2)}$ for the moment, the scalar field $z_{1;\alpha}^{(2)}$ is a sum of iid random fields with finite moments, and hence the asymptotic normality of its finite-dimensional distributions follows immediately from the multidimensional CLT. Modulo tightness, this explains why $z_{1;\alpha}^{(2)}$ ought to converge to a Gaussian field. Even this simple case, however, holds several useful lessons:
\begin{itemize} 
     \item If the distribution of the bias $b_1^{(2)}$ is fixed independent of $n_1$ is and non-Gaussian, then the distribution of $z_{1;\alpha}^{(2)}$ will not be Gaussian, even in the limit when $n_1\gives \infty$. 
    \item If the first layer biases $b_j^{(1)}$ are drawn iid from a fixed distribution $\mu_b$ and $\sigma$ is non-linear, then higher moments of $\mu_b$ will contribute to the variance of each neuron post-activation $\sigma(z_{j;\alpha}^{(1)})$, causing the covariance of the Gaussian field at infinite width to be non-universal. 
    \item Unlike in deeper layers, as long as $n_0$ is fixed, the distribution of each neuron pre-activation $z_{j;\alpha}^{(1)}$ in the first layer will not be Gaussian, unless the weights and biases in layer $1$ are themselves Gaussian. This explains why, in the initial condition \eqref{E:K-initial} the distribution is non-Gaussian in the first layer.
\end{itemize}
In light of the first two points, what should one assume about the bias distribution? There are, it seems, two options. The first is to assume that the variance of the biases tends to zero as $n_1\gives \infty$, putting them on par with the weights. The second, which we adopt in this article, is to declare all biases to be Gaussian. 

The first trick in proving Theorem \ref{T:NNGP} for general depth and width appears already when $L=1$ but the output dimension $n_2$ is at least two.\footnote{Neal \cite{neal1996priors} states erroneously on page 38 of his thesis that $z_{i;\alpha}^{(2)}$ and $z_{j;\alpha}^{(2)}$ will be independent because the weights going into them are independent. This is not true at finite width but becomes true in the infinite width limit.} In this case, even for a single network input $x_\alpha$, at finite values of the network width $n_1$ different components of the random $n_2$-dimensional vector $z_\alpha^{(2)}$ are not independent, due to their shared dependence on the vector $z_\alpha^{(1)}$. The key observation, which to the author's knowledge was first presented in \cite{lee2017deep}, is to note that the components of $z_\alpha^{(2)}$ are independent \textit{conditional on the first layer} (i.e. on $z_\alpha^{(1)}$) and are approximately Gaussian when $n_1$ is large by the CLT. The conditional variance, which captures the main dependence on $z_\alpha^{(1)}$, has the following form:
\begin{equation}\label{E:z-intro}
\Sigma_{\alpha\alpha}^{(2)} := \Var{z_{i;\alpha}^{(2)}~\big|~z_\alpha^{(1)}} = C_b + \frac{C_W}{n_1}\sum_{j=1}^{n_1} \sigma\lr{z_{j;\alpha}^{(1)}}^2. 
\end{equation}
This is an example of what we'll call a \textit{collective observable}, an average over all neurons in a layer of the same function applied to the pre-activations at each neuron (see \S \ref{S:collective-proof} for the precise definition). In the shallow $L=1$ setting, $\Sigma_{\alpha\alpha}^{(2)}$ is a sum of $n_1$ iid random variables with finite moments. Hence, by the LLN, it converges almost surely to its mean as $n_1\gives \infty$. This causes the components of $z_\alpha^{(2)}$ to become independent in the infinite width limit, since the source of their shared randomness, $\Sigma_{\alpha\alpha}^{(L+1)}$, can be replaced asymptotically by its expectation. 

The proof for general $L$ follows a similar pattern. Exactly as before, for any $0\leq \ell\leq L$, the components of the pre-activations at layer $\ell+1$ are still conditionally independent, given the pre-activations at layer $\ell$. When the width $n_\ell$ is large the conditional distribution of each component over any finite collection of inputs is therefore approximately Gaussian by the CLT. Moreover, the conditional covariance across network inputs has the form:
\begin{equation}\label{E:sigma-intro}
\Sigma_{\alpha\beta}^{(\ell+1)} := \Cov\lr{z_{i;\alpha}^{(\ell+1)},\, z_{i;\beta}^{(\ell+1)}~\big|~z_\alpha^{(\ell)},\, z_\beta^{(\ell)}} = C_b + \frac{C_W}{n_{\ell}}\sum_{j=1}^{n_{\ell}} \sigma\lr{z_{j;\alpha}^{(\ell)}}\sigma\lr{z_{j;\beta}^{(\ell)}}.
\end{equation}
The summands on the right hand side are no longer  independent at finite width if $\ell \geq 2$. However, $\Sigma_{\alpha\beta}^{(\ell+1)}$ are still collective observables, and the crucial point is to check that their dependence is sufficiently weak that we may still apply the LLN. Verifying this is the heart of the proof of Theorem \ref{T:NNGP} and is carried out in \S \ref{S:collective-proof}.

Let us mention that, in addition to the approach outlined above, other methods for showing that wide neural networks are asymptotically Gaussian processes are possible. In the prior article \cite{matthews2018gaussian}, for instance, the idea is to use that the entries of $z_\alpha^{(\ell)}$ are exchangeable and argue using an exchangeable CLT. This leads to some technical complications which, at least in the way the argument is carried out in \cite{matthews2018gaussian}, result in unnatural restrictions on the class of non-linearities and weight distributions considered there. Let us also mention that in the article \cite{lee2017deep}, the non-independence at finite width of the components of $z_{\alpha}^{(\ell)}$ for large $\ell$ was circumvented by considering only the sequential limit in which $n_\ell\gives \infty$ in order of increasing $\ell$. The effect is that for every $\ell$ the conditional covariance $\Sigma_{\alpha\beta}^{(\ell)}$ has already converged to its mean before $n_{\ell+1}$ is taken large. However, this way of taking the infinite width limit seems to the author somewhat unnatural and is any case not conducive to studying finite width corrections as in \cite{roberts2021principles,yaida2020non}, which we plan to take up in future work.

We conclude this section by pointing the reader to several other related strands of work. The first are articles such as \cite{daniely2016toward}, which quantify the magnitude of the difference
\[
\frac{1}{n_\ell}\sum_{i=1}^{n_\ell}z_{i;\alpha}^{(\ell)}z_{i;\beta}^{(\ell)}  - \lim_{n_1,\ldots, n_{\ell}\gives \infty}\E{\frac{1}{n_\ell}\sum_{i=1}^{n_\ell}z_{i;\alpha}^{(\ell)}z_{i;\beta}^{(\ell)}}
\]
between the empirical overlaps $n_{\ell}^{-1}\inprod{z_\alpha^{(\ell)}}{z_\beta^{(\ell)}}$ of  pre-activations and the corresponding infinite width limit uniformly over network inputs $x_\alpha,x_\beta$ in a compact subset of $\R^{n_0}$. In a similar vein are articles such as \cite{eldan2021non}, which give quantitative estimates at finite width for the distance from $x_\alpha\mapsto z_\alpha^{(\ell)}$ to a nearby Gaussian process. 

The second is the series of articles starting with the work of Yang \cite{yang2019scaling,yang2019tensori,yang2020tensorii,yang2020tensoriii}, which develops the study not only of initialization but also certain aspects of inference with infinitely wide networks using what Yang terms tensor programs. As part of that series, the article \cite{yang2019tensori} establishes that in the infinite width limit many different architectures become Gaussian processes. However, the arguments in those articles are significantly more technical than the ones presented here since they are focused on building the foundation for the tensor program framework. At any rate, to the best of the author's knowledge, no prior article addresses universality of the Gaussian process limit with respect to the weight distribution in deep networks (for shallow networks with $L=1$ this was considered by Neal in \cite{neal1996priors}). Finally, that random neural networks converge to Gaussian processes in the infinite width limit under various restrictions but for architectures other than fully connected is taken up in \cite{garriga2018deep,novak2018bayesian,yang2019tensori}.

\section{Proof of Theorem \ref{T:NNGP}}\label{S:proof}
Let us recall the notation. Namely, we fix a network depth $L\geq 1$, an input dimension $n_0\geq 1,$ an output dimension $n_{L+1}\geq 1$, hidden layer widths $n_1,\ldots, n_L\geq 1$ and a non-linearity $\sigma$ satisfying \eqref{E:sigma-def}. We further assume that the networks weights and biases are independent and random as in \eqref{E:W-def} and \eqref{E:b-def}. To prove Theorem \ref{T:NNGP} we must show that the random fields $x_\alpha\mapsto z_\alpha^{(L+1)}$ converge weakly in distribution to a Gaussian process in the limit where $n_1,\dots, n_L$ tend to infinity. We start with the convergence of finite-dimensional distributions. Let us therefore fix a collection
\[
x_A = \set{x_\alpha,\quad \alpha\in A}
\]
of $\abs{A}$ distinct network inputs in $\R^{n_0}$ and introduce for each $\ell=0,\ldots, L+1$, every $i=1,\ldots, n_\ell$, and all $\alpha\in A$ the vectorized notation
\[
z_{i;A}^{(\ell)} := \lr{z_{i;\alpha}^{(\ell)},\, \alpha\in A}\in \R^{\abs{A}},\qquad z_{A}^{(\ell)}:= \lr{z_{i;A}^{(\ell)},\, i=1,\ldots, n_\ell}\in \R^{n_\ell \times \abs{A}}.
\]
The following result states that the distribution of the random variable $z_A^{(L+1)}$ with values in $\R^{n_{L+1}\times \abs{A}}$ converges to that of the claimed Gaussian field.
\begin{proposition}[Convergence of Finite-Dimensional Distributions]\label{P:fdd}
Fix $L\geq 1$ and $n_0,n_{L+1}.$ The distribution of $z_A^{(L+1)}$ converges weakly as $n_1,\ldots, n_L\gives \infty$ to that of a centered Gaussian in $\R^{n_{L+1}\times \abs{A}}$ with iid rows for which the covariance 
\[
K_{\alpha\beta}^{(L+1)}=\lim_{n_1,\ldots, n_L\gives\infty} \Cov\lr{z_{i;\alpha}^{(L+1)},z_{i;\beta}^{(L+1)}} ,\qquad \alpha,\beta\in A
\]
between the entries in each row satisfies the recursion \eqref{E:K-rec} with initial condition \eqref{E:K-initial}.
\end{proposition}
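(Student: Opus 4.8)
The plan is to prove Proposition~\ref{P:fdd} by induction on the depth $L$, peeling off one layer at a time and combining a conditional central limit theorem (to handle the final layer given the penultimate one) with a law of large numbers for the collective observable $\Sigma_{\alpha\beta}^{(L+1)}$ from \eqref{E:sigma-intro} (to kill the randomness in the conditional covariance). The base case $L=1$ is essentially Neal's argument: conditioning on $z_A^{(1)}$ (equivalently on the first-layer weights and biases), the rows of $z_A^{(2)}$ are sums of $n_1$ i.i.d.\ mean-zero random vectors in $\R^{|A|}$ with finite moments, so the multidimensional Lindeberg CLT gives conditional asymptotic normality with conditional covariance $\Sigma_{\alpha\beta}^{(2)} = C_b + \tfrac{C_W}{n_1}\sum_j \sigma(z_{j;\alpha}^{(1)})\sigma(z_{j;\beta}^{(1)})$; since the summands $\sigma(z_{j;\alpha}^{(1)})\sigma(z_{j;\beta}^{(1)})$ are i.i.d.\ with finite mean (using \eqref{E:sigma-prop} and finiteness of all moments of the weight and bias distributions in layer~$1$), the strong LLN gives $\Sigma_{\alpha\beta}^{(2)}\to K_{\alpha\beta}^{(2)}$ a.s.\ with $K^{(2)}$ as in \eqref{E:K-initial}. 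Feeding the a.s.\ limit of the (random) conditional covariance into the conditional CLT and invoking bounded convergence on characteristic functions yields unconditional convergence of $z_A^{(2)}$ to the claimed Gaussian with i.i.d.\ rows.

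For the inductive step, suppose the finite-dimensional distributions of $z_A^{(L)}$ converge to a centered Gaussian with i.i.d.\ rows and covariance $K^{(L)}$ satisfying the recursion. Conditionally on $z_A^{(L)}$, the rows $z_{i;A}^{(L+1)}$ of $z_A^{(L+1)}$ are i.i.d.\ (across $i$) sums of the $n_L$ independent random vectors $W_{ij}^{(L+1)}\sigma(z_{j;A}^{(L)}) + \tfrac{1}{n_L}b_i^{(L+1)}\mathbf 1$, so a conditional Lindeberg-type CLT applies provided the conditional covariance $\Sigma_{\alpha\beta}^{(L+1)}$ stabilizes and a conditional Lindeberg condition holds; both reduce to control of $\tfrac{1}{n_L}\sum_j \sigma(z_{j;\alpha}^{(L)})^2$ and its higher-moment analogues, which are themselves collective observables. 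The key input I would now quote is the law of large numbers for collective observables established in \S\ref{S:collective-proof}: although the $z_{j;A}^{(L)}$ are not independent for $L\ge 2$, they are weakly enough correlated that $\tfrac{1}{n_L}\sum_{j=1}^{n_L} f(z_{j;A}^{(L)})$ converges (in probability, say) to $\E{f(z_A)}$ where $z_A\sim\mN(0,K^{(L)})$, for $f$ of polynomial growth. Applying this with $f(z_A)=\sigma(z_\alpha)\sigma(z_\beta)$ gives $\Sigma_{\alpha\beta}^{(L+1)}\to K_{\alpha\beta}^{(L+1)}$ as in \eqref{E:K-rec}; applying it with $f$ a suitable fourth-moment functional verifies the conditional Lindeberg condition. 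Then the conditional CLT gives, for any test vector $\xi$, that $\E{e^{i\inprod{\xi}{z_A^{(L+1)}}}\mid z_A^{(L)}}$ converges to the Gaussian characteristic function with covariance built from $K^{(L+1)}$ and i.i.d.\ rows; since these conditional characteristic functions are bounded by $1$, dominated convergence removes the conditioning and completes the induction.

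The main obstacle, as the excerpt itself flags in steps 2--4 of \S\ref{S:disc}, is making the passage from the conditional CLT to an unconditional statement fully rigorous \emph{in the joint limit} $n_1,\dots,n_L\to\infty$ rather than the sequential one used in \cite{lee2017deep}. Concretely, the conditional covariance $\Sigma^{(L+1)}$ is a function of $z_A^{(L)}$, whose law is only converging (not fixed), so one must argue that the event on which $\Sigma^{(L+1)}$ is far from $K^{(L+1)}$ has vanishing probability uniformly as all widths diverge; this is exactly what the LLN for collective observables in \S\ref{S:collective-proof} supplies, and the cleanest way to use it is to phrase the whole argument at the level of characteristic functions: write $\E{e^{i\inprod{\xi}{z_A^{(L+1)}}}} = \E{\E{e^{i\inprod{\xi}{z_A^{(L+1)}}}\mid z_A^{(L)}}}$, expand the inner conditional expectation using the row-wise conditional independence as a product over $i=1,\dots,n_{L+1}$, estimate each factor by a conditional CLT bound with error controlled by collective observables, and then take expectations, using that the error terms are bounded and tend to $0$ in probability. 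A secondary technical point is checking the conditional Lindeberg (or third-moment Lyapunov) condition: this requires that $\E{|\sigma(z_{j;A}^{(L)})|^{2+\delta}}$-type collective observables remain bounded, which follows from \eqref{E:sigma-prop} together with uniform-in-width moment bounds on $z_{j;\alpha}^{(\ell)}$ that presumably are also established in \S\ref{S:collective-proof}. Once finite-dimensional convergence is in hand, tightness in $C^0(T,\R^{n_{L+1}})$ — needed for the full Theorem~\ref{T:NNGP} but not for Proposition~\ref{P:fdd} — would be handled separately via a Kolmogorov-type moment estimate on increments $z_\alpha^{(L+1)}-z_\beta^{(L+1)}$, again reducing to uniform moment bounds on the collective observables, but that lies outside the present statement.
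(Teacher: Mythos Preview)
Your proposal is essentially correct and captures the key ideas---conditioning on the penultimate layer, invoking the LLN for collective observables from \S\ref{S:collective-proof}, and passing to unconditional convergence via bounded convergence on characteristic functions---but the route differs from the paper's in one structural respect. The paper does \emph{not} run a conditional Lindeberg CLT directly with general weights. Instead it proceeds in two stages: first (\S\ref{S:Gaussian-proof}) it assumes the weights in layers $\ell\ge 2$ are Gaussian, so that conditionally on $\mF_L$ the vectors $z_{i;A}^{(L+1)}$ are \emph{exactly} Gaussian and the characteristic function equals $\E{\exp[-\tfrac12\sum_i\inprod{\Sigma_A^{(L+1)}\xi_i}{\xi_i}]}$ with no CLT error term whatsoever; then (\S\ref{S:gen-weights}) it removes the Gaussian assumption by a Lindeberg swapping argument one weight at a time, with the swap error controlled by $n_L^{-1/2}$ times a collective observable. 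The induction on depth is confined entirely to the proof of the collective-observable LLN (Lemma~\ref{L:collective-properties}), not to the main fdd argument.

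What each approach buys: the paper's Gaussian-first trick avoids having to verify any conditional Lindeberg/Lyapunov condition for the final layer, since exact Gaussianity replaces asymptotic normality; the price is the extra swapping step, but that is clean and modular. Your direct approach is more economical in outline---no swapping---but requires you to control a conditional third- or fourth-moment Lyapunov ratio, which (as you correctly note) reduces to boundedness of higher-moment collective observables. That is indeed available from the same lemma, so your route would go through; it is simply a different allocation of the technical work.
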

Once we have proved Proposition \ref{P:fdd} in \S \ref{S:fdd-proof}, it remains to show tightness. For this, we fix a compact subset $T\subseteq \R^{n_0}$. The tightness of $x_\alpha\mapsto z_\alpha^{(L+1)}$ in $C^0(T,\R^{n_{L+1}})$ follows immediately from the Arzel\`a-Ascoli Theorem and the following result, which we prove in \S \ref{S:tightness-proof}.
\begin{proposition}
[High Probability Equicontinuity and Equiboundedness of $z_\alpha^{(L+1)}$]\label{P:tightness}
For every $L\geq 1,\,\epsilon>0$ there exists $C=C(\epsilon,\sigma,T,L,C_b,C_W)>0$ so that 
\begin{equation}\label{E:aa-hyp}
\sup_{x_\alpha,x_\beta\in T}\frac{ \norm{z_\alpha^{(L+1)}-z_\beta^{(L+1)}}_2}{\norm{x_\alpha-x_\beta}_2}\leq C \qquad \text{and}\qquad \sup_{x_\alpha\in T}\norm{z_\alpha^{(L+1)}}\leq C
\end{equation}
with probability at least $1-\epsilon$.
\end{proposition}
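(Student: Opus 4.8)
The plan is to deduce both estimates in \eqref{E:aa-hyp} from uniform-in-width polynomial moment bounds for the pre-activations and their spatial increments (and, for the first estimate, for the input--output Jacobian), combined with the Kolmogorov continuity criterion. We may assume $T$ is convex, replacing it if necessary by its convex hull, which is still compact and contains $T$.

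\emph{Step 1 (moment recursion --- the heart of the argument).} I would prove by induction on $\ell$ that for every positive integer $m$ there is a constant $C=C(m,\sigma,T,L,C_b,C_W)$, \emph{independent of the widths} $n_1,\dots,n_L$, such that for all $1\le \ell\le L+1$, all $i,k$, and all $x_\alpha,x_\beta\in T$,
\[
\E{\abs{\z{i}{\alpha}{\ell}}^{2m}}\le C,\qquad \E{\abs{\z{i}{\alpha}{\ell}-\z{i}{\beta}{\ell}}^{2m}}\le C\,\norm{x_\alpha-x_\beta}_2^{2m},\qquad \E{\abs{\partial_{x_{k}}\z{i}{\alpha}{\ell}}^{2m}}\le C.
\]
The base case $\ell=1$ is immediate, since $\z{i}{\alpha}{1}$ is an affine function of $x_\alpha$ whose coefficients are weighted sums of i.i.d.\ centered variables with all moments finite, so the three bounds follow from the Marcinkiewicz--Zygmund/Rosenthal inequality (using that $\norm{x_\alpha}_2$ is bounded on $T$ and that $\sum_k\abs{x_{k;\alpha}-x_{k;\beta}}^{2m}\le\norm{x_\alpha-x_\beta}_2^{2m}$ for $m\ge1$). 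For the step $\ell\to\ell+1$ I condition on layer $\ell$: by \eqref{E:z-def}, $\z{i}{\alpha}{\ell+1}$ is then a sum over $j=1,\dots,n_\ell$ of conditionally independent mean-zero terms $W_{ij}^{(\ell+1)}\sigma(\z{j}{\alpha}{\ell})$, so Rosenthal's inequality bounds its conditional $2m$-th moment by a constant multiple of
\[
\lr{\frac{C_W}{n_\ell}\sum_{j=1}^{n_\ell}\sigma(\z{j}{\alpha}{\ell})^2}^{\!m}+\lr{\frac{C_W}{n_\ell}}^{\!m}\E{\abs{\widehat W}^{2m}}\sum_{j=1}^{n_\ell}\abs{\sigma(\z{j}{\alpha}{\ell})}^{2m}.
\]
Taking expectations, the first (``collective observable'') term is controlled by the power-mean inequality together with $\abs{\sigma(t)}\le C(1+\abs{t}^{k+1})$ (a consequence of \eqref{E:sigma-prop}) and the inductive hypothesis applied at the larger moment exponent $2m(k+1)$ --- legitimate since $\mu$ has all moments --- while the second term carries a harmless factor $n_\ell^{1-m}\le1$; this gives the first bound at layer $\ell+1$. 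The increment bound follows the same way, noting the biases cancel in $\z{i}{\alpha}{\ell+1}-\z{i}{\beta}{\ell+1}$ and using $\abs{\sigma(a)-\sigma(b)}\le C\abs{a-b}(1+\abs{a}^k+\abs{b}^k)$, the inductive increment bound, and Cauchy--Schwarz. Finally, differentiating \eqref{E:z-def} in $x_\alpha$ (valid for a.e.\ $x_\alpha$, almost surely) yields the tangent recursion $\partial_{x_k}\z{i}{\alpha}{\ell+1}=\sum_j W_{ij}^{(\ell+1)}\sigma'(\z{j}{\alpha}{\ell})\,\partial_{x_k}\z{j}{\alpha}{\ell}$, to which the identical scheme (Rosenthal, power mean, $\abs{\sigma'(t)}\le C(1+\abs{t}^k)$, Cauchy--Schwarz, inductive hypothesis) applies.

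\emph{Step 2 (equiboundedness).} Since $n_{L+1}$ is fixed, the first two bounds of Step 1 at $\ell=L+1$ give, width-uniformly, $\E{\norm{z_\alpha^{(L+1)}}_2^{2m}}\le C$ and $\E{\norm{z_\alpha^{(L+1)}-z_\beta^{(L+1)}}_2^{2m}}\le C\norm{x_\alpha-x_\beta}_2^{2m}$ for all $m$. Fixing $2m>n_0$ and invoking the quantitative Kolmogorov continuity theorem, $x_\alpha\mapsto z_\alpha^{(L+1)}$ has a modification with a.s.\ H\"older sample paths on $T$ whose H\"older seminorm has $2m$-th moment bounded uniformly in the widths; together with the pointwise bound at one input this controls $\E{\big(\sup_{x_\alpha\in T}\norm{z_\alpha^{(L+1)}}_2\big)^{2m}}$, and Markov's inequality yields the second inequality of \eqref{E:aa-hyp} with probability at least $1-\eps/2$. (Already this step, via the H\"older modulus, supplies the equicontinuity needed for Arzel\`a--Ascoli.)

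\emph{Step 3 (Lipschitz bound) and the main obstacle.} Because $\sigma$ is absolutely continuous with (polynomially, hence locally) bounded derivative and $T$ is compact convex, $x_\alpha\mapsto z_\alpha^{(L+1)}$ is a.s.\ Lipschitz on $T$, with Lipschitz constant at most $\sup_{x_\alpha\in T}\norm{J_{x_\alpha}}_F\le\sum_{k=1}^{n_0}\sup_{x_\alpha\in T}\norm{\partial_{x_k}z_\alpha^{(L+1)}}_2$, where $J$ is the a.e.-defined input--output Jacobian. Step 1 gives width-uniform bounds on all polynomial moments of each $\partial_{x_k}z_\alpha^{(L+1)}$ \emph{pointwise} in $x_\alpha$; once these are upgraded to a bound on $\E{\big(\sup_{x_\alpha\in T}\norm{\partial_{x_k}z_\alpha^{(L+1)}}_2\big)^{2m}}$, Markov gives the first inequality of \eqref{E:aa-hyp} with probability at least $1-\eps/2$, completing the proof. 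This supremum step is the crux: for a merely absolutely continuous $\sigma$ the map $x_\alpha\mapsto\partial_{x_k}z_\alpha^{(L+1)}$ need not be continuous (only $\sigma$, not $\sigma'$, is continuous), so one cannot apply Kolmogorov's criterion directly to the tangent field. I would handle it by exploiting that the pre-activation fields $x_\alpha\mapsto z_\alpha^{(\ell)}$, and hence the conditional-covariance collective observables $\Sigma_{\alpha\beta}^{(\ell)}$ of \eqref{E:sigma-intro}, \emph{are} continuous in the inputs, and running a layerwise chaining argument that, conditionally on each layer, controls the supremum over $x_\alpha\in T$ of the resulting (Gaussian-type) increments of the tangent network via the moments from Step 1; an alternative is to establish Step 3 first for $C^\infty$ mollifications $\sigma_\delta$ of $\sigma$ --- for which the Jacobian is continuous in $x_\alpha$ --- with all constants depending on $\sigma_\delta$ only through $k$ and the constant in $\abs{\sigma_\delta'(t)}\le C(1+\abs{t}^k)$ (uniform for $\delta\le1$), and then pass to the limit $\delta\to0$ using $\mathrm{Lip}_T(z^{(L+1)})\le\liminf_{\delta\to0}\mathrm{Lip}_T(z^{(L+1),\delta})$ and locally uniform convergence of the network functions. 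Making either argument uniform in the widths is where I expect the real difficulty to lie; by contrast the moment recursion of Step 1 is robust and essentially routine.
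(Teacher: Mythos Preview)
Your approach differs substantially from the paper's, and the comparison is instructive.

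\textbf{What you get right.} Steps 1--2 are correct and give a clean, width-uniform proof of equiboundedness and \emph{H\"older} equicontinuity of $x_\alpha\mapsto z_\alpha^{(L+1)}$ on $T$: the Rosenthal-based moment recursion is sound (the inflation of the moment exponent by a factor $(k+1)$ per layer is harmless since the induction on $\ell$ is for all $m$ simultaneously), and Kolmogorov's criterion then applies with $2m>n_0$. This already suffices for tightness in $C^0(T)$ via Arzel\`a--Ascoli, and is more elementary than the paper's route.

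\textbf{Where the gap is.} The proposition as stated asks for the \emph{Lipschitz} bound, and your Step 3 does not close. For the mollification route you need $\E{(\sup_{x_\alpha\in T}\|J^\delta_\alpha\|)^{2m}}$ bounded \emph{uniformly in $\delta$}; but upgrading pointwise Jacobian moments to a supremum via Kolmogorov requires increment bounds on $J^\delta$, and those inevitably involve $\sigma_\delta''$ (or a modulus for $\sigma_\delta'$), which blows up as $\delta\to 0$ whenever $\sigma'$ is discontinuous (e.g.\ ReLU). So the constants cannot depend on $\sigma_\delta$ only through the polynomial growth of $\sigma_\delta'$, as you hoped. Your option (a) is too vague to evaluate: ``Gaussian-type increments of the tangent network'' does not name a concrete estimate, and any chaining for the tangent field faces the same obstruction---there is no continuity hypothesis on $\sigma'$ to control increments.

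\textbf{How the paper avoids this.} The paper never touches the Jacobian as a field in $x_\alpha$. Instead it bounds the Lipschitz constant \emph{layer by layer}: writing $\sigma(W_i x_\alpha+b_i)-\sigma(W_i x_\beta+b_i)=(W_i\cdot\xi_{\alpha\beta})\int_0^{\|x_\alpha-x_\beta\|}\sigma'(W_i\cdot(x_\beta+t\xi_{\alpha\beta})+b_i)\,dt$, the Lipschitz ratio of one layer is dominated by $\tfrac{1}{n}\sum_i\big[\sup_{y\in\widehat T}(\sigma'(W_i\cdot y+b_i))^4+\sup_{\xi\in\widetilde T}(W_i\cdot\xi)^4\big]$, where $\widehat T,\widetilde T$ are Minkowski combinations of the (image) input set. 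The key geometric observation is that if the layer's input set is a $\lambda$-Lipschitz image of a compact set in $\R^{n_0}$, then so are $\widehat T,\widetilde T$ (as images of compacts in $\R^{3n_0+1}$), so their covering numbers are controlled \emph{independently of the ambient width}. A standard chaining for the scalar fields $y\mapsto W_i\cdot y$ over these low-dimensional sets then bounds the expected supremum using only the polynomial growth of $\sigma'$---no continuity of $\sigma'$ is needed. Iterating across layers gives the Lipschitz bound with width-free constants. This low-dimensional-image chaining is precisely the missing idea in your Step 3; once you have it, the mollification is unnecessary.
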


\subsection{Finite-Dimensional Distributions: Proof of Proposition \ref{P:fdd}}\label{S:fdd-proof}
We will prove Proposition \ref{P:fdd} in two steps. First, we prove a special case in which we keep the weights in layer $1$ completely general as in \eqref{E:W-def} but take weights in layers $\ell\geq 2$ to be independent Gaussians:
\[
W_{ij}^{(\ell)}\sim \mN\lr{0,C_Wn_{\ell-1}^{-1}},\qquad \text{iid}.
\]
We continue to assume (as in the statement of Theorem \ref{T:NNGP}) that all biases are Gaussian:
\[
b_i^{(\ell)}\sim \mN(0,C_b),\qquad \text{iid}.
\]
The argument in this case is the technical heart of this paper and is presented in \S \ref{S:Gaussian-proof} - \S\ref{S:collective-proof}. Ultimately, it relies on the analysis of collective observables, which we isolate in \S \ref{S:collective-proof}. A simple Lindeberg swapping argument and induction on layer detailed in \S \ref{S:gen-weights} allows us to extend Proposition \ref{P:fdd} to general weights in layers $\ell\geq 2$ from the Gaussian case.  

\subsubsection{Proof of Proposition \ref{P:fdd} with Gaussian Weights in Layers $\ell\geq 2$}\label{S:Gaussian-proof} Fix 
\[
\Xi = \lr{\xi_i,\,i=1,\ldots, n_{L+1}}\in\R^{n_{L+1}\times \abs{A}},\qquad \xi_i=\lr{\xi_{i;\alpha},\,i=1,\ldots, n_{L+1},\, \alpha\in A}\in \R^{\abs{A}}
\]
and consider the characteristic function
\[
\chi_A(\Xi) = \E{\exp\left[-i\inprod{z_A^{(L+1)}}{\Xi}\right]}=\E{\exp\left[-i\sum_{\alpha\in A}\sum_{i=1}^{n_{L+1}}z_{i;\alpha}^{(L+1)}\xi_{i;\alpha}\right]}
\]
of the random variable $z_A^{(L+1)}\in \R^{n_{L+1}\times \abs{A}}$. By Levy's continuity theorem, it is sufficient to show that 
\begin{equation}\label{E:levy-goal}
    \lim_{n_1,\ldots,n_L\gives \infty}\chi_A(\Xi)  = \exp\left[-\frac{1}{2}\sum_{i=1}^{n_{L+1}} \inprod{K_A^{(L+1)}\xi_i}{\xi_i}\right],
\end{equation}
where
\[
K_A^{(L+1)} = \lr{K_{\alpha\beta}^{(L+1)}}_{\alpha,\beta\in A}
\]
is the matrix defined by the recursion \eqref{E:K-rec} with initial condition \eqref{E:K-initial}. Writing 
\begin{equation}\label{E:F-def}
\mF_\ell : = \text{filtration defined by }\set{W^{(\ell')}, b^{(\ell')},\, \, \ell'=1,\ldots, \ell},    
\end{equation}
we may use the tower property to write
\begin{align}
      \label{E:cond-chi}\chi_A(\Xi) &= \E{\E{\exp\left[-i\inprod{z_A^{(L+1)}}{\Xi}\right]~\big|~\mF_{L}}}.
\end{align}
Note that conditional on $\mF_L$, the random vectors $z_{i;A}^{(L+1)}\in \R^{\abs{A}}$ in layer $L+1$ for each $i=1,\ldots, n_{L+1}$ are iid Gaussians, since we've assumed for now that weights in layers $\ell\geq 2$ are Gaussian. Specifically, 
\[
z_{i;A}^{(L+1)} \stackrel{d}{=} \lr{\Sigma_A^{(L+1)}}^{1/2}G_i,\qquad G_i\sim \mN\lr{0, \mathrm{I}_{\abs{A}}} \text{  iid}\qquad 1\leq i \leq n_{L+1},
\]
where for any $\alpha,\beta\in A$ the conditional covariance is
\begin{equation}\label{E:sigma-def}
    \lr{\Sigma_A^{(L+1)}}_{\alpha\beta} = \Cov\lr{z_{i;\alpha}^{(L+1)},\, z_{i;\beta}^{(L+1)}~\big|~\mF_{L}}=C_b + \frac{C_W}{n_L}\sum_{j=1}^{n_L}\sigma\lr{z_{j;\alpha}^{(L)}}\sigma\lr{z_{j;\beta}^{(L)}}.
\end{equation}
Using \eqref{E:cond-chi} and the explicit form of the characteristic function of a Gaussian reveals
\begin{align}
      \label{E:chi-form}\chi_A(\Xi) &= \E{\exp\left[-\frac{1}{2}\sum_{i=1}^{n_{L+1}} \inprod{\Sigma_A^{(L+1)}\xi_i}{\xi_i} \right] }.
\end{align}
The crucial observation is that each entry of the conditional covariance matrix $\Sigma_A^{(L+1)}$ is an average over $j=1,\ldots, n_L$ of the same fixed function applied to the vector $z_{j;A}^{(L)}$. While $z_{j;A}^{(L)}$ are not independent at finite values of $n_1,\ldots, n_{L-1}$ for $L>1$, they are sufficiently weakly correlated that a weak law of large numbers still holds: 
\begin{lemma}\label{L:collective-sigma}
Fix $n_0,n_{L+1}$. There exists a $\abs{A}\x \abs{A}$ PSD matrix 
\[
K_A^{(L+1)} = \lr{K_{\alpha\beta}^{(L+1)}}_{\alpha,\beta\in A}
\]
such that for all $\alpha,\beta\in A$
\[
\lim_{n_1,\ldots, n_L\gives \infty}\E{\lr{\Sigma_A^{(L+1)}}_{\alpha\beta}}= K_{\alpha\beta}^{(L+1)}\qquad \text{and}\qquad   \lim_{n_1,\ldots, n_L\gives \infty}\Var{\lr{\Sigma_A^{(L+1)}}_{\alpha\beta}}= 0.
\]
\end{lemma}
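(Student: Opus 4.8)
The plan is to prove, by induction on the layer index $\ell$, a single statement controlling \emph{every} collective observable at layer $\ell$, of which the entries of $\Sigma_A^{(\ell+1)}$ are the case needed here. For a continuous, polynomially bounded $f:\R^{\abs A}\gives\R$ put $\mO_f^{(\ell)}:=n_\ell^{-1}\sum_{j=1}^{n_\ell}f(z_{j;A}^{(\ell)})$, so that $\lr{\Sigma_A^{(\ell+1)}}_{\alpha\beta}=C_b+C_W\,\mO^{(\ell)}_{f_{\alpha\beta}}$ with $f_{\alpha\beta}(z):=\sigma(z_\alpha)\sigma(z_\beta)$ continuous and polynomially bounded by \eqref{E:sigma-prop}. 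The inductive hypothesis $\mathrm{IH}(\ell)$ I would carry has two parts: (i) for every continuous polynomially bounded $f$ there is a constant $\overline f^{(\ell)}$ with $\mO^{(\ell)}_f\gives\overline f^{(\ell)}$ in $L^2$ as $n_1,\ldots,n_\ell\gives\infty$; and (ii) for every such $f$ and every $p\gr1$, $\E{\abs{\mO^{(\ell)}_f}^p}$ is bounded uniformly in $n_1,\ldots,n_\ell$. Lemma \ref{L:collective-sigma} is then the case $\ell=L$, $f=f_{\alpha\beta}$, with $K^{(L+1)}_{\alpha\beta}:=C_b+C_W\overline f^{(L)}_{\alpha\beta}$; tracing back the construction of $\overline f^{(L)}_{\alpha\beta}$ shows this is the value produced by the recursion \eqref{E:K-rec}, and $K_A^{(L+1)}$ is PSD because it is the in-probability (entrywise $L^2$) limit of the conditional covariance matrices $\Sigma_A^{(L+1)}$, each of which is PSD.

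The base case $\ell=1$ is the only place where the weights need not be Gaussian, and here it does not matter: the rows of $W^{(1)}$ together with the entries of $b^{(1)}$ are independent across neurons $j$, so the vectors $z^{(1)}_{j;A}$ are iid, with moments of all orders by \eqref{E:mu-W-def}, \eqref{E:b-def} and the polynomial growth of $\sigma$. Hence $\mO^{(1)}_f$ is an ordinary average of iid summands: $\E{\mO^{(1)}_f}=\E{f(z^{(1)}_{1;A})}$ exactly, $\Var{\mO^{(1)}_f}=n_1^{-1}\Var{f(z^{(1)}_{1;A})}\gives0$, and Jensen's inequality applied to $t\mapsto t^p$ gives $\E{\abs{\mO^{(1)}_f}^p}\ls\E{\abs{f(z^{(1)}_{1;A})}^p}<\infty$. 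That is $\mathrm{IH}(1)$.

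For the inductive step I would assume $\mathrm{IH}(\ell-1)$ and condition on $\mF_{\ell-1}$: in the Gaussian-weight regime the vectors $z^{(\ell)}_{1;A},\ldots,z^{(\ell)}_{n_\ell;A}$ are conditionally iid $\mN(0,\Sigma_A^{(\ell)})$, so, writing $(\Phi g)(\Sigma):=\E{g(Z)}$ for $Z\sim\mN(0,\Sigma)$,
\[
\E{\mO^{(\ell)}_f\mid\mF_{\ell-1}}=(\Phi f)\lr{\Sigma_A^{(\ell)}},\qquad \Var{\mO^{(\ell)}_f\mid\mF_{\ell-1}}\ls n_\ell^{-1}\bigl(\Phi(f^2)\bigr)\lr{\Sigma_A^{(\ell)}}.
\]
Now if $\abs{g(z)}\ls C(1+\norm{z}^m)$ then $\abs{(\Phi g)(\Sigma)}\ls C'(1+\norm{\Sigma}^{m/2})$ by Gaussian moment estimates, and since $\norm{\Sigma_A^{(\ell)}}\ls\tr\Sigma_A^{(\ell)}=\sum_{\alpha\in A}\lr{C_b+C_W\mO^{(\ell-1)}_{g_\alpha}}$ with $g_\alpha(z):=\sigma(z_\alpha)^2$, part (ii) of $\mathrm{IH}(\ell-1)$ makes $\E{\abs{(\Phi g)(\Sigma_A^{(\ell)})}^p}$ bounded uniformly in the widths for every continuous polynomially bounded $g$ and every $p$. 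Combined with Jensen and the conditional-iid structure this yields part (ii) of $\mathrm{IH}(\ell)$, and makes the first term of the law-of-total-variance bound $\Var{\mO^{(\ell)}_f}\ls n_\ell^{-1}\E{\bigl(\Phi(f^2)\bigr)(\Sigma_A^{(\ell)})}+\Var{(\Phi f)(\Sigma_A^{(\ell)})}$ an $O(n_\ell^{-1})$. For the second term I would use part (i) of $\mathrm{IH}(\ell-1)$: $(\Sigma_A^{(\ell)})_{\alpha\beta}=C_b+C_W\mO^{(\ell-1)}_{f_{\alpha\beta}}\gives K^{(\ell)}_{\alpha\beta}$ in $L^2$, so $\Sigma_A^{(\ell)}\gives K_A^{(\ell)}$ in probability with $K_A^{(\ell)}$ PSD; since $\Sigma\mapsto\mN(0,\Sigma)$ is weakly continuous and the uniform moment bounds give uniform integrability, $\Phi f$ is continuous on the PSD cone, hence $(\Phi f)(\Sigma_A^{(\ell)})\gives(\Phi f)(K_A^{(\ell)})$ in probability and then, again by uniform integrability, in $L^2$ to this constant. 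Therefore $\Var{(\Phi f)(\Sigma_A^{(\ell)})}\gives0$ and $\E{\mO^{(\ell)}_f}=\E{(\Phi f)(\Sigma_A^{(\ell)})}\gives(\Phi f)(K_A^{(\ell)})=:\overline f^{(\ell)}$, completing $\mathrm{IH}(\ell)$.

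The main obstacle is not any single estimate but arranging an inductive statement robust enough to run. Conditioning on $\mF_{\ell-1}$ collapses the genuinely awkward point — that the summands $\sigma(z^{(\ell)}_{j;\alpha})\sigma(z^{(\ell)}_{j;\beta})$ are correlated at finite width once $\ell\gr2$ — into a concentration question for $(\Phi f)(\Sigma_A^{(\ell)})$; but turning convergence in probability of $\Sigma_A^{(\ell)}$ into convergence of $\E{(\Phi f)(\Sigma_A^{(\ell)})}$ needs uniform integrability, which forces one to carry moment bounds of \emph{all} orders (not merely second moments) through the layers. The polynomial growth of $\sigma$ in \eqref{E:sigma-prop}, and the induced polynomial growth of $\Phi g$ in its matrix argument, is precisely what keeps this bookkeeping closed. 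Finally, every estimate above is uniform in $n_1,\ldots,n_\ell$, so sending the widths to infinity jointly rather than one layer at a time requires no additional argument.
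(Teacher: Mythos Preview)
Your proof is correct and follows essentially the same route as the paper's: induct on the layer index, condition on $\mF_{\ell-1}$ to exploit conditional Gaussianity, recognize the entries of $\Sigma_A^{(\ell)}$ as collective observables at the previous layer, and close the loop via the law of total variance. The only notable difference is bookkeeping: you carry uniform $L^p$ bounds (your part (ii)) through the induction and invoke uniform integrability to pass from convergence in probability of $\Sigma_A^{(\ell)}$ to convergence of $\E{(\Phi f)(\Sigma_A^{(\ell)})}$, whereas the paper passes to an almost-sure version via the Skorohod representation and then applies the generalized dominated convergence theorem with a polynomial dominating family---the same issue handled by a slightly different standard device.
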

\begin{proof}
Lemma \ref{L:collective-sigma} is a special case of Lemma \ref{L:collective-properties} (see \S \ref{S:collective-proof}).
\end{proof}
\vspace{.3cm}
Lemma \ref{L:collective-sigma} implies that $\Sigma_A^{(L+1)}$ converges in distribution to $K_A^{(L+1)}$. In view of \eqref{E:chi-form} and the definition of weak convergence this immediately implies \eqref{E:levy-goal}. It therefore remains to check that $K_A^{(L+1)}$ satisfies the desired recursion. For this, note that at any values of $n_1,\ldots, n_L$ we find
\begin{align*}
    \Cov\lr{z_{i;\alpha}^{(L+1)},\, z_{i;\beta}^{(L+1)}} &= \E{\lr{\Sigma_A^{(L+1)}}_{\alpha\beta}}
    = C_b + C_W\E{\sigma\lr{z_{1;\alpha}^{(L)}}\sigma\lr{z_{1;\beta}^{(L)}}}.
\end{align*}
When $L=1$, we therefore see that
\[
\Cov\lr{z_{i;\alpha}^{(2)}, z_{i;\beta}^{(2)}} = C_b +C_W\E{\sigma(z_{i;\alpha}^{(1)})\sigma(z_{i;\beta}^{(1)})},
\]
where the law of $(z_{i;\alpha}^{(1)},z_{i;\beta}^{(1)})$ is determined by the distribution $\mu_W$ of weights in layer $1$ and does not depend on $n_1$. This confirms the initial condition \eqref{E:K-initial}. Otherwise, if $L>1$, the convergence of finite-dimensional distributions that we've already established yields 
\begin{align*}
    K_{\alpha\beta}^{(L+1)} =& \lim_{n_1,\ldots, n_L\gives \infty} \Cov\lr{z_{i;\alpha}^{(L+1)},\, z_{i;\beta}^{(L+1)}} =\lim_{n_1,\ldots, n_{L-1}\gives \infty} \lr{C_b + C_W\E{\sigma\lr{z_{1;\alpha}^{(\ell)}}\sigma\lr{z_{1;\beta}^{(\ell)}}}}.
\end{align*}
Since $\sigma$ is continuous we may invoke the continuous mapping theorem to conclude that 
\begin{align*}
    K_{\alpha\beta}^{(L+1)}  &=C_b + C_W\mathbb E_{(z_\alpha,z_\beta)\sim G(0,K^{(L)})}\left[\sigma(z_\alpha)\sigma(z_\beta)\right],
\end{align*}
which confirms the recursion \eqref{E:K-rec}. This completes the proof that the finite-dimensional distributions of $z_\alpha^{(L+1)}$ converge to those of the desired Gaussian process, modulo two issues. First, we must prove Lemma \ref{L:collective-sigma}. This is done in \S \ref{S:collective-proof} by proving a more general result, Lemma \ref{L:collective-properties}. Second, we must remove the assumption that the weights in layers $\ell\geq 2$ are Gaussian. This is done in \S \ref{S:gen-weights}. \hfill $\square$

\subsubsection{Collective Observables with Gaussian Weights: Generalizing Lemma \ref{L:collective-sigma}}\label{S:collective-proof}
This section contains the key technical argument in our proof of Proposition \ref{P:fdd}. To state the main result, define a \textit{collective observable at layer $\ell$} to be any random variable of the form
\[
\mO_{n_\ell,f;A}^{(\ell)}:= \frac{1}{n_\ell} \sum_{i=1}^{n_\ell} f(z_{i;A}^{(\ell)}),
\]
where $f:\R^{\abs{A}}\gives \R$ is measurable and polynomially bounded:
    \[
    \exists C>0,\, k\geq 1\text{ s.t. }\forall z\in \R^{\abs{A}}\qquad \abs{f(z)}\leq C\lr{1+\norm{z}_2^k}.
    \]
We continue to assume, as \S \ref{S:Gaussian-proof}, that the weights (and biases) in layers $\ell\geq 2$ are Gaussian and will remove this assumption in \S \ref{S:gen-weights}. Several key properties of collective observables are summarized in the following Lemma: 
\begin{lemma}[Key Properties of Collective Observables]\label{L:collective-properties}
Let $\mO_{n_\ell,f;A}^{(\ell)}$ be a collective observable at layer $\ell$. There exists a deterministic scalar $\overline{O}_{f;A}^{(\ell)}$ such that 
\begin{equation}\label{E:collective-mean}
\lim_{n_1,\ldots, n_{\ell-1}\gives \infty}\E{\mO_{n_\ell, f;A}^{(\ell)}}=\overline{O}_{f;A}^{(\ell)}.
\end{equation}
Moreover, 
\begin{equation}\label{E:self-avg}
\lim_{n_1,\ldots, n_\ell\gives \infty}\Var{\mO_{n_\ell,f;A}^{(\ell)}}=0.
\end{equation}
Hence, we have the following convergence in probability
\[
\lim_{n_1,\ldots, n_\ell\gives \infty} \mO_{n_\ell,f;A}^{(\ell)} \quad \stackrel{p}{\longrightarrow}\quad \overline{O}_{f;A}^{(\ell)}.
\]
%and the moments of $z_{A}^{(L+1)}$ are uniformly integrable:
%\begin{equation}\label{E:unif-int}
%\forall r\geq 1,\, \alpha\in A,\,\, \limsup_{n_1,\ldots, n_{L}\gives %\infty} \E{\abs{z_{i;\alpha}^{(L+1)}}^{r}}<\infty.     
%\end{equation}
\end{lemma}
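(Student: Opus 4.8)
The plan is to prove Lemma~\ref{L:collective-properties} by induction on the layer index $\ell$, built around the single structural fact used everywhere in this paper: conditionally on $\mF_{\ell-1}$ the vectors $z_{i;A}^{(\ell)}\in\R^{\abs A}$ for $i=1,\ldots,n_\ell$ are iid centered Gaussians with covariance $\Sigma_A^{(\ell)}$, and every entry of $\Sigma_A^{(\ell)}$ is itself a collective observable one layer down, namely $(\Sigma_A^{(\ell)})_{\alpha\beta}=C_b+C_W\,\mO_{n_{\ell-1},g_{\alpha\beta};A}^{(\ell-1)}$ with $g_{\alpha\beta}(z)=\sigma(z_\alpha)\sigma(z_\beta)$. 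The growth hypothesis \eqref{E:sigma-prop} forces $\sigma$, hence $g_{\alpha\beta}$, to be polynomially bounded, so the inductive hypothesis at layer $\ell-1$ applies to each $g_{\alpha\beta}$.

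For the base case $\ell=1$ the vectors $z_{i;A}^{(1)}=b_i^{(1)}+W^{(1)}x_A$ are genuinely iid over $i$, since distinct rows of $W^{(1)}$ and distinct entries of $b^{(1)}$ are independent; by \eqref{E:mu-W-def} and Gaussianity of the biases they have finite moments of all orders, so $f(z_{i;A}^{(1)})$ is iid with finite variance, and the ordinary law of large numbers gives $\E{\mO_{n_1,f;A}^{(1)}}=\E{f(z_{1;A}^{(1)})}=:\overline{O}_{f;A}^{(1)}$, constant in $n_1$, and $\Var{\mO_{n_1,f;A}^{(1)}}=n_1^{-1}\Var{f(z_{1;A}^{(1)})}\to 0$.

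For the inductive step I assume the statement at layer $\ell-1$ for every polynomially bounded test function and write $\Phi_h(\Sigma)$ for the expectation of $h$ under $\mN(0,\Sigma)$. Conditioning on $\mF_{\ell-1}$ gives $\E{\mO_{n_\ell,f;A}^{(\ell)}}=\E{\Phi_f(\Sigma_A^{(\ell)})}$, while the law of total variance together with conditional independence gives
\[
\Var{\mO_{n_\ell,f;A}^{(\ell)}}=\frac{1}{n_\ell}\,\E{\Var{f(z_{1;A}^{(\ell)})\,\big|\,\mF_{\ell-1}}}+\Var{\Phi_f(\Sigma_A^{(\ell)})},
\]
and the first term on the right is at most $\frac{1}{n_\ell}\E{\Phi_{f^2}(\Sigma_A^{(\ell)})}$. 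By the inductive hypothesis each entry of $\Sigma_A^{(\ell)}$ converges, in probability and in mean, to the deterministic number $K_{\alpha\beta}^{(\ell)}:=C_b+C_W\,\overline{O}_{g_{\alpha\beta};A}^{(\ell-1)}$ as $n_1,\ldots,n_{\ell-1}\gives\infty$, so $\Sigma_A^{(\ell)}\to K_A^{(\ell)}$ in probability, with $K_A^{(\ell)}$ PSD as a limit of conditional covariance matrices. A separate, routine induction (conditional Gaussianity plus the displayed representation of $\Sigma_A^{(\ell)}$) gives the uniform moment bound $\sup_{n_1,\ldots,n_{\ell-1}}\E{\norm{z_{1;A}^{(\ell)}}_2^p}<\infty$ for all $p$; since $\E{\Phi_{f^2}(\Sigma_A^{(\ell)})}=\E{f(z_{1;A}^{(\ell)})^2}$ and $\E{\Phi_f(\Sigma_A^{(\ell)})^2}\le\E{\Phi_{f^2}(\Sigma_A^{(\ell)})}$ by conditional Jensen, the families $\Phi_f(\Sigma_A^{(\ell)})$ and $\Phi_{f^2}(\Sigma_A^{(\ell)})$ are uniformly integrable. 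Combined with continuity of $\Sigma\mapsto\Phi_h(\Sigma)$ on PSD matrices this yields $\E{\Phi_f(\Sigma_A^{(\ell)})}\to\Phi_f(K_A^{(\ell)})=:\overline{O}_{f;A}^{(\ell)}$ and $\Var{\Phi_f(\Sigma_A^{(\ell)})}\to 0$; plugging into the two displays and then letting $n_\ell\gives\infty$ proves \eqref{E:collective-mean} and \eqref{E:self-avg}, and the asserted convergence in probability follows from Chebyshev.

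The hard part is the continuity of $\Sigma\mapsto\Phi_h(\Sigma)$ at the limit matrix $K_A^{(\ell)}$, which can be singular (for instance when $\sigma$ is constant). Writing $Z=\Sigma^{1/2}G$ with $G\sim\mN(0,\Id)$ and using that $\Sigma\mapsto\Sigma^{1/2}$ is continuous on PSD matrices, continuity of $\Phi_h$ is immediate whenever $h$ is continuous, by dominated convergence against the uniform polynomial moment bound above. This is exactly the case needed here: $h=g_{\alpha\beta}$ to propagate the induction, and $h=f$ in the application Lemma~\ref{L:collective-sigma}, both continuous since $\sigma$ is. For a merely measurable $f$ and a degenerate $K_A^{(\ell)}$ one must work harder, because the limiting Gaussian collapses onto a proper subspace along which a measurable $f$ need not behave continuously; I would handle this either by restricting to continuous $f$ (which suffices for everything the paper uses) or by adding the hypothesis that $f$ be continuous off a set that is null for the limiting law $\mN(0,K_A^{(\ell)})$.
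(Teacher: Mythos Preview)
Your argument is correct and follows essentially the same inductive scheme as the paper, with only cosmetic differences: the paper uses Skorohod representation plus the generalised dominated convergence theorem where you use a uniform-moment induction and uniform integrability, and the paper's variance decomposition $\frac{1}{n_\ell}\Var{f(z_{1;A}^{(\ell)})}+(1-\tfrac{1}{n_\ell})\Cov\bigl(f(z_{1;A}^{(\ell)}),f(z_{2;A}^{(\ell)})\bigr)$ together with the bound $\abs{\Cov}\le \Var{\E{f(z_{1;A}^{(\ell)})\mid\mF_{\ell-1}}}$ is just a rearrangement of your law-of-total-variance identity, since $\E{f(z_{1;A}^{(\ell)})\mid\mF_{\ell-1}}=\Phi_f(\Sigma_A^{(\ell)})$. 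Your explicit flagging of the continuity requirement on $f$ at a possibly singular $K_A^{(\ell)}$ is in fact more careful than the paper, which applies dominated convergence to $f\bigl((\Sigma_A^{(\ell)})^{1/2}G\bigr)$ without comment; as you note, continuous $f$ suffices both for Lemma~\ref{L:collective-sigma} and for propagating the induction.
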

\begin{proof}
This proof is by induction on $\ell$, starting with $\ell=1$. In this case, $z_{i;A}^{(1)}$ are independent for different $i$. Moreover, for each $i,\alpha$ we have
\[
z_{i;\alpha}^{(1)}= b_i^{(1)} + \sum_{j=1}^{n_0}W_{ij}^{(1)} x_{j;\alpha} =b_i^{(1)} + \lr{\frac{C_W}{n_0}}^{1/2}\sum_{j=1}^{n_0}\widehat{W}_{ij}^{(1)} x_{j;\alpha}.
\]
Hence, $z_{i;\alpha}^{(1)}$ have finite moments since $b_i^{(1)}$ are iid Gaussian and $\widehat{W}_{ij}^{(1)}$ are mean $0$ with finite higher moments. In particular, since $f$ is polynomially bounded, we find for every $n_1$ that 
\[
\E{\mO_{n_1,f;A}^{(1)}} = \E{f(z_{1;A}^{(1)})},
\]
which is finite and independent of $n_1$, confirming \eqref{E:collective-mean}. Further, $\mO_{n_1,f;A}^{(1)}$ is the average of $n_1$ iid random variables with all moments finite. Hence, \eqref{E:self-avg} follows by the weak law of large numbers, completing the proof of the base case. 

Let us now assume that we have shown \eqref{E:collective-mean} and \eqref{E:self-avg} for all $\ell=1,\ldots, L$. We begin by checking that  \eqref{E:collective-mean} holds at layer $L+1$. We have
\begin{equation}\label{E:mean-red}
\E{\mO_{n_{L+1}, f;A}^{(L+1)}} = \E{f(z_{1;A}^{(L+1)})}.    
\end{equation}
Since the weights and biases in layer $L+1$ are Gaussian and independent of $\mF_L$, we find
\begin{equation}\label{E:z-Gaussian}
z_{1;A}^{(L+1)} \stackrel{d}{=} \lr{\Sigma_A^{(L+1)}}^{1/2} G ,
\end{equation}
where $\Sigma_A^{(L+1)}$ is the conditional covariance defined in \eqref{E:sigma-def} and $G$ is an $\abs{A}$-dimensional standard Gaussian. The key point is that $\Sigma_A^{(L+1)}$ is a collective observable at layer $L$. Hence, by the inductive hypothesis, there exists a PSD matrix $\overline{\Sigma}_A^{(L+1)}$ such that $\Sigma_A^{(L+1)}$ converges in probability to $\overline{\Sigma}_A^{(L+1)}$ as $n_1,\ldots, n_L\gives \infty$. To establish \eqref{E:collective-mean} it therefore suffices in view of \eqref{E:mean-red} to check that
\begin{equation}\label{E:mean-goal}
\lim_{n_1,\ldots, n_{L}\gives \infty} \E{f\lr{\lr{\Sigma_A^{(L+1)}}^{1/2} G}} = \E{f\lr{\lr{\overline{\Sigma}_A^{(L+1)}}^{1/2} G}},
\end{equation}
where the right hand side is finite since $f$ is polynomially bounded and all polynomial moments of $G$ are finite. To establish \eqref{E:mean-goal}, let us invoke the Skorohod representation theorem to find a common probability space on which there are versions of $\Sigma_A^{(L+1)}$ -- which by an abuse of notation we will still denote by $\Sigma_A^{(L+1)}$ -- that converge to $\overline{\Sigma}_A^{(L+1)}$ almost surely. Next, note that since $f$ is polynomially bounded we may repeatedly apply $ab\leq \frac{1}{2}(a^2+b^2)$ to find
\begin{equation}\label{E:poly-f}
\abs{f((\Sigma_A^{(L+1)})^{1/2} G)}\leq p\lr{(\Sigma_A^{(L+1)})^{1/2}} + q(G),
\end{equation}
where $p$ is a polynomial in the entries of $(\Sigma_A^{(L+1)})^{1/2}$, $q$ a polynomial in the entries of $G$, and the polynomials $p,q$ don't depend on $n_1,\ldots, n_{L}$. The continuous mapping theorem shows that 
\[
\lim_{n_1,\ldots, n_L\gives \infty}\E{p\lr{(\Sigma_A^{(L+1)})^{1/2}}} = \E{p\lr{(\overline{\Sigma}_A^{(L+1)})^{1/2}}}.
\]
Thus, since all moments of Gaussian are finite, \eqref{E:mean-goal} follows from the generalized dominated convergence theorem. It remains to argue that \eqref{E:self-avg} holds at layer $L+1$. To do this, we write
\begin{align}
    \label{E:var-red}\Var{\mO_{n_{L+1},f;A}^{(L+1)}} = \frac{1}{n_{L+1}} \Var{f(z_{1;A}^{(L+1)})} + \lr{1-\frac{1}{n_{L+1}}}\Cov\lr{f(z_{1;A}^{(L+1)}),\, f(z_{2;A}^{(L+1)})}.
\end{align}
Observe that  
\[
\Var{f(z_{1;A}^{(L+1)})} \leq \E{f(z_{1;A}^{(L+1)})^2} = \E{\frac{1}{n_{L+1}}\sum_{i=1}^{n_{L+1}}\lr{f(z_{i;A}^{(L+1)})}^2}<\infty,
\]
since we already showed that \eqref{E:collective-mean} holds at layer $L+1$. Next, recall that, conditional on $\mF_L$, neurons in layer $L+1$ are independent. The law of total variance and Cauchy-Schwartz yield
\begin{align}
 \notag \abs{\Cov\lr{f(z_{1;A}^{(L+1)}),\, f(z_{2;A}^{(L+1)})}}&=\abs{\Cov\lr{\E{f(z_{1;A}^{(L+1)})~|~\mF_{L}},\, \E{f(z_{2;A}^{(L+1)})~|~\mF_L}}}\\
 \label{E:var-reduce}&\leq \Var{\E{f(z_{1;A}^{(L+1)})~|~\mF_{L}}}.
\end{align}
Using \eqref{E:z-Gaussian} and the polynomial estimates \eqref{E:poly-f} on $f$, we conclude that the conditional expectation on the previous line  is some polynomially bounded function of the components of $(\Sigma_{A}^{(L+1)})^{1/2}$. Hence, we may apply dominated convergence as above to find 
\[
\lim_{n_1,\ldots,n_L\gives \infty} \Var{\E{f(z_{1;A}^{(L+1)})~|~\mF_{L}}} = \Var{\E{f(\overline{\Sigma}_A^{1/2})G}} = 0,\qquad G\sim \mN(0,\mathrm{I}_{\abs{A}})
\]
since $\E{f(\overline{\Sigma}_A^{1/2})G}$ is a constant. This proves \eqref{E:self-avg} for observables at layer $L+1$ and completes the proof of Lemma \ref{L:collective-properties}.
\end{proof}

\subsubsection{Proof of Proposition \ref{P:fdd} for General Weights}\label{S:gen-weights} In this section, we complete the proof of Proposition \ref{P:fdd} by removing the assumption from \S \ref{S:Gaussian-proof} that weights in layers $\ell\geq 2$ are Gaussian. To do this, let us introduce some notation. Let us write
\[
x_\alpha\mapsto z_\alpha^{(\ell)}
\]
for the pre-activations at layers $\ell=0,\ldots, L+1$ of a random fully connected network in which, as in the general case of Theorem \ref{T:NNGP}, all weights and biases are independent, biases are Gaussian as in \eqref{E:b-def} and weights in all layers are drawn from a general centered distribution with the correct variance and finite higher moments as in \eqref{E:W-def} and \eqref{E:mu-W-def}. Next, let us write
\[
x_\alpha\mapsto \twiddle{z}_\alpha^{(\ell)},
\]
for the vector of pre-activations obtained by replacing, in each layer $\ell=2,\ldots, L+1$, all weights $W_{ij}^{(\ell)}$ by iid centered Gaussians with variance $C_W/n_{\ell-1}$. We already saw that the distribution of $\twiddle{z}_A^{{(L+1)}}$ converges weakly to that of the desired Gaussian in the infinite width limit. Our goal is thus to show that, as $n_1,\ldots, n_L$ tend to infinity, $z_A^{{(L+1)}}$ and $\twiddle{z}_A^{{(L+1)}}$ converge weakly to the same distribution. We will proceed by induction on $L$. When $L=0$ the claim is trivial since, by construction, the weight and bias distributions in layer $1$ are identical in both $z_\alpha^{(1)}$ and $\twiddle{z}_\alpha^{(1)}$ (recall that when we proved Proposition \ref{P:fdd} in \S \ref{S:Gaussian-proof} we had Gaussian weights only starting from layer $2$ and general weights in layer $1$.)

Suppose therefore that we have shown the claim for $\ell=0,\ldots, L$. By the Portmanteau theorem and the density of smooth compactly supported functions in the space of continuous compactly supported functions equipped with the supremum norm, it suffices to show that for any smooth function $g:\R^{n_{L+1}\times\abs{A}}\gives \R$ with compact support we have
\begin{equation}\label{E:g-goal}
\lim_{n_1,\ldots, n_L\gives \infty}\lr{ \E{g(z_A^{(L+1)})} - \E{g(\twiddle{z}_A^{(L+1)})} }= 0.    
\end{equation}
To check \eqref{E:g-goal}, let us define an intermediate object:
\[
z_\alpha^{(L+1),\bdot} = b^{(L+1)}+W^{(L+1),\bdot}\sigma\lr{z_\alpha^{(L)}},
\]
where the entries $W_{ij}^{(L+1),\bdot}$ of $W^{(L+1),\bdot}$ are iid Gaussian with mean $0$ and variance $C_W/n_L$. That is, we take the vector $\sigma(z_\alpha^{(L)})$ of post-activations from layer $L$ obtained by using general weights in layers $1,\ldots, L$ and use Gaussian weights only in layer $L+1$. Our first step in checking \eqref{E:g-goal} is to show that it this relation holds when $z_A^{(L+1)}$ is replaced by $z_A^{(L+1),\bdot}$. 
\begin{lemma}\label{L:Lindeberg}
Let $x_A=\set{x_\alpha,\,\in A}$ be a finite subset of $\R^{n_0}$ consists of $\abs{A}$ distinct elements. Fix in addition $n_{L+1}\geq 1$ and a smooth compactly supported function $g:\R^{n_{L+1}\x\abs{A}}\gives \R$. There exists $C>0$ and a collective observable $\mO_{n_L,f;A}^{(L)}$ at layer $L$ so that 
\[
\abs{\E{g\lr{z_A^{(L+1)}}}-\E{g\lr{z_A^{(L+1),\bdot}}}}\leq C n_{L+1}^3 n_{L}^{-1/2} \E{\mO_{n_L,f;A}^{(L)}}.
\]
\end{lemma}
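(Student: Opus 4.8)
The plan is a Lindeberg exchange carried out on the weights of layer $L+1$ only, conditionally on the filtration $\mF_L$ of \eqref{E:F-def}. Both $z_A^{(L+1)}$ and $z_A^{(L+1),\bdot}$ are affine images of independent weight matrices applied to the \emph{same} $\mF_L$-measurable vector $\sigma(z_A^{(L)})$: the entries $W_{ij}^{(L+1)}$ are iid with mean $0$, variance $C_W/n_L$, and finite higher moments by \eqref{E:mu-W-def}, while the entries $W_{ij}^{(L+1),\bdot}$ are iid Gaussian with the same mean and variance. Enumerate the $n_{L+1}n_L$ entries of the layer-$(L+1)$ weight matrix, and for $m=0,\dots,n_{L+1}n_L$ let $z_A^{(m)}$ be the pre-activations obtained by using Gaussian weights for the first $m$ entries and general weights for the rest, so that $z_A^{(0)}=z_A^{(L+1)}$ and $z_A^{(n_{L+1}n_L)}=z_A^{(L+1),\bdot}$. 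Then $\E{g(z_A^{(L+1)})}-\E{g(z_A^{(L+1),\bdot})}$ telescopes into $n_{L+1}n_L$ consecutive differences, the $m$-th of which amounts to swapping a single entry $W_{ij}^{(L+1)}$ for its Gaussian counterpart.

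For a single such swap write the argument of $g$ as $U+W v$, where $U\in\R^{n_{L+1}\x\abs{A}}$ collects $b^{(L+1)}$ and the contributions of all other entries of the weight matrix (hence is independent of the swapped entry, conditionally on $\mF_L$), $v\in\R^{n_{L+1}\x\abs{A}}$ is the $\mF_L$-measurable vector equal to $\sigma(z_{j;\beta}^{(L)})$ in the slots $(i,\beta)$, $\beta\in A$, and $0$ elsewhere, and $W$ is either the general weight or its Gaussian replacement. A third-order Taylor expansion of $g$ in the direction $v$ around $U$ gives $g(U+Wv)=g(U)+W\inprod{\nabla g(U)}{v}+\tfrac12 W^2\inprod{\nabla^2 g(U)v}{v}+R$, with $\abs{R}\ls\tfrac16\norm{D^3 g}_\infty\,\abs{W}^3\lr{\sum_{\beta\in A}\abs{\sigma(z_{j;\beta}^{(L)})}}^3$, the last step using that $v$ is supported on the single row $i$. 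Conditioning on $\mF_L$ and using that $U,v$ are independent of $W$ given $\mF_L$ while the two laws of $W$ share their first two moments, the $g(U)$, linear, and quadratic terms contribute identically and cancel in the consecutive difference; all of these terms are integrable since $\nabla g,\nabla^2 g$ are bounded and $\sigma(z_A^{(L)})$ has finite moments (established in the proof of Lemma \ref{L:collective-properties}). Hence, using $\E{\abs{W}^3\mid\mF_L}\ls c(\mu,C_W)\,n_L^{-3/2}$ (finite third moment of $\mu$, and the Gaussian one), the $m$-th consecutive difference is bounded in absolute value by $C\,n_L^{-3/2}\,\E{\lr{\sum_{\beta\in A}\abs{\sigma(z_{j;\beta}^{(L)})}}^3}$.

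It remains to sum over the $n_{L+1}n_L$ entries. The bound above is uniform in the row index $i$, contributing a factor $n_{L+1}$; summing over $j=1,\dots,n_L$ gives $\sum_{j=1}^{n_L}\E{f(z_{j;A}^{(L)})}=n_L\,\E{\mO_{n_L,f;A}^{(L)}}$, where $f(z):=\lr{\sum_{\beta\in A}\abs{\sigma(z_\beta)}}^3$ is polynomially bounded because $\sigma$ is polynomially bounded by \eqref{E:sigma-prop}, so $\mO_{n_L,f;A}^{(L)}$ is a genuine collective observable at layer $L$. Collecting factors, $\abs{\E{g(z_A^{(L+1)})}-\E{g(z_A^{(L+1),\bdot})}}\ls C\,n_{L+1}\,n_L\,n_L^{-3/2}\,\E{\mO_{n_L,f;A}^{(L)}}=C\,n_{L+1}\,n_L^{-1/2}\,\E{\mO_{n_L,f;A}^{(L)}}$, which is in particular at most $C\,n_{L+1}^3\,n_L^{-1/2}\,\E{\mO_{n_L,f;A}^{(L)}}$, as claimed; note $C$ depends only on $\norm{D^3 g}_\infty$, $\abs{A}$, $\mu$, $C_W$ and not on the widths.

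I expect the only real subtlety to be the bookkeeping that forces the first three Taylor terms to cancel exactly: verifying that $U$ and $v$ are independent of the swapped weight conditionally on $\mF_L$, and that the expansion may be split term by term (which needs the moment bounds on $\sigma(z_A^{(L)})$ already available from Lemma \ref{L:collective-properties}). The remaining ingredients — the telescoping, the moment estimate $\E\abs{W}^3=O(n_L^{-3/2})$, and recognizing the residual error as $\E{\mO_{n_L,f;A}^{(L)}}$ for an admissible polynomially bounded $f$ — are routine.
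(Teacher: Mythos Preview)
Your proof is correct and follows essentially the same Lindeberg swapping argument as the paper: telescoping over successive weight replacements, third–order Taylor expansion of $g$, and cancellation of the zeroth through second order terms because the two weight laws share their first two moments, leaving a remainder that organizes into $\E{\mO_{n_L,f;A}^{(L)}}$ for a polynomially bounded $f$. The only cosmetic difference is that the paper swaps the weights of layer $L+1$ \emph{column by column} (all $n_{L+1}$ entries $W_{ik}^{(L+1)}$, $i=1,\dots,n_{L+1}$, at once for each $k$), which is why its cubic remainder carries an $n_{L+1}^3$ from the triple sum over output neurons, whereas your entry-by-entry swap yields the sharper factor $n_{L+1}$ that you then weaken to $n_{L+1}^3$ to match the statement; one small caveat is that your appeal to Lemma~\ref{L:collective-properties} for finiteness of moments of $\sigma(z_A^{(L)})$ is formally for Gaussian weights in layers $\geq 2$, but the conditional argument goes through regardless since, after conditioning on $\mF_L$ and the remaining layer-$(L+1)$ weights, $U$ and $v$ are fixed and the moment matching is pointwise.
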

\begin{proof}
This is a standard Lindeberg swapping argument. Namely, for each $\alpha\in A$ and  $k=0,\ldots, n_L$ define
\[
z_\alpha^{(L+1),k}:=b^{(L+1)}+W^{(L+1),k}\sigma\lr{z_\alpha^{(L)}},
\]
where the first $k$ entries of each row of $W^{(L+1),k}$ are iid Gaussian with mean $0$ and variance $C_W/n_L$, while the remaining entries are $(C_W/n_L)^{-1/2}$ times iid draws $\widehat{W}_{ij}^{(L+1)}$ from the general distribution $\mu$ of network weights, as in \eqref{E:W-def} and \eqref{E:mu-W-def}. With this notation, we have
\[
z_\alpha^{(L+1)}=z_\alpha^{(L),0},\qquad \twiddle{z}_\alpha^{(L+1),\bdot} = z_\alpha^{(L),n_L}.
\]
Thus, 
\[
\E{g\lr{z_A^{(L+1)}}}-\E{g\lr{\twiddle{z}_A^{(L+1)}}} = \sum_{k=1}^{n_L} \E{g\lr{z_A^{(L+1),k-1}}}-\E{g\lr{z_A^{(L+1),k}}}.
\]
For any $Z\in \R^{n_{\ell+1}\times \abs{A}}$ and
\[
\delta Z =\lr{\delta z_{i;\alpha}\, i = 1,\ldots, n_{\ell+1},\, \alpha\in A}\in \R^{n_{\ell+1}\times \abs{A}}
\]
consider the third order Taylor expansion of $g$ around $Z$.
\begin{align*}
g(Z+\delta Z) &= g(Z) + \sum_{\substack{\alpha\in A\\ i=1,\ldots, n_{L+1}}} g_{i;\alpha} \delta z_{i;\alpha} +  \sum_{\substack{\alpha_1,\alpha_2\in A\\ i_1,i_2=1,\ldots, n_{L+1}}} g_{i_1,i_2;\alpha_1,\alpha_2} \delta z_{i_1;\alpha_1}\delta z_{i_2;\alpha_2}\\
&+ O\lr{ \sum_{\substack{\alpha_1,\alpha_2,\alpha_3\in A\\ i_1,i_2,i_3=1,\ldots, n_{L+1}}} \abs{\delta z_{i_1;\alpha_1}\delta z_{i_2;\alpha_2}\delta z_{i_3;\alpha_3}}}.
\end{align*}
Let us write
\[
z_{i;\alpha}^{(L+1),k-1} = z_{i;\alpha}^{(L+1),k} + n_L^{-1/2} Y_{i,k;\alpha},\qquad \delta Z_{i,k;\alpha} = C_W^{1/2} \lr{\twiddle{W}_{ik}^{(L+1)}-\widehat{W}_{ik}^{(L+1)}}\sigma(z_{k;\alpha}^{(L)}),
\]
where $\twiddle{W}_{ik}^{(L+1)}\sim \mN(0,1)$. Then, Taylor expanding $g$ to third order around $Z_k=z_{i;\alpha}^{(L+1),k}$ and, using that the first two moments of $(C_W n_L^{-1})^{1/2}\widehat{W}_{ij}^{(L)}$ match those of $\mN(0,C_Wn_L^{-1})$, we find that
\[
\E{g\lr{z_A^{(L+1),k-1}}}-\E{g\lr{\twiddle{z}_A^{(L+1),k}}} = O\lr{n_L^{-3/2} n_{L+1}^3  \E{p\lr{\abs{\sigma(z_{k;\alpha}^{(L)})},\,\alpha\in A)}} },
\]
where $p$ is a degree $3$ polynomial in the $\abs{A}$-dimensional vector of absolute values $|\sigma(z_{k;\alpha}^{(\ell)})|,\,\alpha\in A.$ Summing this over $k$ completes the proof. 
\end{proof}

\noindent To make use of Lemma \ref{L:Lindeberg} let us consider any collective observable $\mO_{n_L,f;A}^{_{(L)}}$ at layer $L$. Recall that by \eqref{E:mean-red} and \eqref{E:var-red} both the mean and variance of $\mO_{n_L,f;A}^{_{(L)}}$ depend only on the distributions of finitely many components of the vector $z_A^{(L)}$. By the inductive hypothesis we therefore find 
\begin{equation}\label{E:mean-equiv}
\lim_{n_1,\ldots, n_L\gives \infty} \E{\mO_{n_L,f;A}^{(L)}} = \lim_{n_1,\ldots, n_L\gives \infty} \E{\twiddle{\mO}_{n_L,f;A}^{(L)}},
\end{equation}
where the right hand side means that we consider the same collective observable but for $\twiddle{z}_A^{(L)}$ instead of $z_A^{(L)}$, which exists by Lemma \ref{L:collective-properties}. Similarly, again using Lemma \ref{L:collective-properties}, we have
\begin{equation}\label{E:var-equiv}
\lim_{n_1,\ldots, n_L\gives \infty} \Var{\mO_{n_L,f;A}^{(L)}} = 0.  
\end{equation}
Therefore, we conclude that 
\begin{equation}\label{E:dist-conv}
\mO_{n_L,f;A}^{(L)}- \twiddle{\mO}_{n_L,f;A}^{(L)} \quad \stackrel{d}{\longrightarrow}\quad 0,\qquad \text{as }n_1,\ldots, n_L\gives \infty.    
\end{equation}
Note that by \eqref{E:mean-equiv} the mean of any collective observable $\E{\mO_{n_L,f;A}^{(L)}}$ is bounded independent of $n_1,\ldots, n_L$ since we saw in Lemma \ref{L:collective-properties} that the limit exists and is bounded when using Gaussian weights. Since $n_{L+1}$ is fixed and finite, the error term $n_L^{-1/2}n_{L+1}^3 \E{\mO_{n_L,f;A}^{_{(L)}}}$ in Lemma \ref{L:Lindeberg} is therefore tends to zero as $n_1,\ldots, n_L\gives \infty$, and \eqref{E:g-goal} is reduced to showing that 
\begin{equation}\label{E:g-goal-2}
\lim_{n_1,\ldots, n_L\gives \infty}\lr{ \E{g(z_A^{(L+1),\bdot})} - \E{g(\twiddle{z}_A^{(L+1)})} }= 0.    
\end{equation}
This follows from \eqref{E:var-equiv} and the inductive hypothesis. Indeed, by construction, conditional on the filtration $\mF_L$ defined by weights and biases in layers up to $L$ (see \eqref{E:F-def}), the $\abs{A}$-dimensional vectors $z_{i;A}^{(L+1),\bdot}$ are iid Gaussians:
\[
z_{i;A}^{(L+1),\bdot} \stackrel{d}{=} \lr{\Sigma_A^{(L+1)}}^{1/2}G_i,\qquad G_i\sim \mN(0, \mathrm{I}_{\abs{A})}\quad iid,
\]
where $\Sigma_A^{(L+1)}$ is the conditional covariance matrix from \eqref{E:sigma-def}. The key point, as in the proof with all Gaussian weights, is that each entry of the matrix $\Sigma_A^{(L+1)}$ is a collective observable at layer $L$. Moreover, since the weights and biases in the final layer are  Gaussian for $z_A^{(L+1),\bdot}$ the conditional distribution of $g(z_A^{(L+1),\bdot})$ given $\mF_L$ is completely determined by $\Sigma_A^{(L+1)}$. In particular, since $g$ is bounded and continuous, we find that 
\[
\E{g(z_A^{(L+1),\bdot})} - \E{g(\twiddle{z}_A^{(L+1)})} = \E{h\lr{\Sigma_A^{(L+1)}}} -\E{h\lr{\twiddle{\Sigma}_A^{(L+1)}}},
\]
where $h:\R^{n_{L+1}\times \abs{A}}\gives \R$ is a bounded continuous function and $\twiddle{\Sigma}_A^{(L+1)}$ is the conditional covariance matrix at layer $L+1$ for $\twiddle{z}_A^{(L+1)}$. Combining this with the convergence in distribution from \eqref{E:dist-conv} shows that \eqref{E:g-goal-2} holds and completes the proof of Proposition \ref{P:fdd} for general weight distributions. \hfill $\square$

\subsection{Tightness: Proof of Proposition \ref{P:tightness}}\label{S:tightness-proof}
In this section, we provide a proof of Proposition \ref{P:tightness}. In the course of showing tightness, we will need several elementary Lemmas, which we record in the \S \ref{S:prep-lem}. We then use them in \S \ref{S:tightness-proof-2} to complete the proof of Proposition \ref{P:tightness}.

\subsubsection{Preparatory Lemmas}\label{S:prep-lem}
For the first Lemma, let us agree to write $\mathcal C(A)$ for the cone over a subset $A$ in a euclidean space and $B_1(\R^n)$ for the unit ball in $\R^n$. 
\begin{lemma}\label{L:lip-image}
  Fix integers $n_0,n_1\geq 1$ and a real number $\lambda \geq 1$. Suppose that $T$ is a compact of $\R^{n_0}$ and $f:\R^{n_0}\gives \R^{n_1}$ is $\lambda$-Lipschitz with respect to the $\ell_2$-norm on both $\R^{n_0}$ and $\R^{n_1}$. Define the Minkowski sums
  \[
  \widehat{T}=f(T)+\mathcal C\lr{f(T)-f(T)}\cap B_1(\R^{n_1}).
  \]
  There exists a constant $C>0$ a compact subset $T'$ of $\R^{3n_0+1}$, and a $C\lambda$-Lipschitz map $g:\R^{3n_0+1}\gives \R^{n_1}$ (all depending only $T, \lambda$,), so that $\widehat{T} = g(T').$
\end{lemma}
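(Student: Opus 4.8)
The plan is to exhibit $\widehat T$ as the image of the obvious parametrizing map, and to arrange by a simple projection trick that this map is Lipschitz with a constant depending only on $T$. Recall that the cone over a set $A\subseteq\R^{n_1}$ is $\mathcal C(A)=\set{ta:\ a\in A,\ t\in[0,1]}$, so that
\[
\widehat T=f(T)+\lr{\mathcal C(f(T)-f(T))\cap B_1(\R^{n_1})}=\bigcup_{a,x,y\in T}\lr{f(a)+\Big(\set{t(f(x)-f(y)):\ t\in[0,1]}\cap B_1(\R^{n_1})\Big)}.
\]
First I would take $T':=T\x T\x T\x[0,1]\subseteq\R^{3n_0+1}$, which is compact and depends only on $T$, and define $g_0:T'\gives\R^{n_1}$ by
\[
g_0(a,x,y,t):=f(a)+P\lr{t\,(f(x)-f(y))},
\]
where $P:\R^{n_1}\gives B_1(\R^{n_1})$, $P(w)=w/\max(1,\norm{w})$, is the nearest-point projection onto the closed unit ball (which is $1$-Lipschitz).

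Next I would verify that $g_0(T')=\widehat T$. Fix $a,x,y\in T$ and put $v=f(x)-f(y)$; since the segment $\set{tv:t\in[0,1]}$ issues from the origin along the ray through $v$, applying $P$ traces this segment out verbatim up to the point where it first meets the boundary of $B_1(\R^{n_1})$ and is then constant, so $\set{P(tv):t\in[0,1]}=\set{tv:t\in[0,1]}\cap B_1(\R^{n_1})$ (the degenerate case $v=0$ gives $\set{0}$ on both sides). Taking the union over $a,x,y\in T$ and recognizing the Minkowski sum yields $g_0(T')=\widehat T$. This is the only step with any content: the naive alternatives — normalizing the difference vector, or letting the scalar run over the $f$-dependent interval $[0,1/\norm v]$ — either destroy continuity where $f(x)=f(y)$ or force $T'$ and the Lipschitz constant to depend on $f$, whereas clipping $t\,v$ to $B_1(\R^{n_1})$ is a $1$-Lipschitz operation that nonetheless sweeps out the entire truncated segment as $t$ ranges over $[0,1]$.

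It remains to estimate the Lipschitz constant. Using that $P$ is $1$-Lipschitz, that $f$ is $\lambda$-Lipschitz, that $t\in[0,1]$, and that $\norm{f(x')-f(y')}\ls\lambda\,\mathrm{diam}(T)$ for $x',y'\in T$, the decomposition $t(f(x)-f(y))-t'(f(x')-f(y'))=t\big((f(x)-f(x'))-(f(y)-f(y'))\big)+(t-t')(f(x')-f(y'))$ gives, for all points of $T'$,
\[
\norm{g_0(a,x,y,t)-g_0(a',x',y',t')}\ls\lambda\max(1,\mathrm{diam}(T))\lr{\norm{a-a'}+\norm{x-x'}+\norm{y-y'}+\abs{t-t'}},
\]
which is at most $2\lambda\max(1,\mathrm{diam}(T))$ times the $\ell_2$-distance in $\R^{3n_0+1}$. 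Finally I would extend $g_0$ to a map $g:\R^{3n_0+1}\gives\R^{n_1}$ with the same Lipschitz constant $C\lambda$, $C:=2\max(1,\mathrm{diam}(T))$, either by Kirszbraun's extension theorem or explicitly by pre-composing $g_0$'s formula with the $1$-Lipschitz metric projections of $\R^{n_0}$ onto $\mathrm{conv}(T)$ and of $\R$ onto $[0,1]$; since $g$ agrees with $g_0$ on $T'$ we still have $g(T')=\widehat T$, and $C,T'$ depend only on $T$. The expected main obstacle is precisely the image identity of the second paragraph — obtaining the truncation by $B_1(\R^{n_1})$ for free out of a $1$-Lipschitz projection — with everything else being routine bookkeeping.
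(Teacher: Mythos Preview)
Your argument is correct, and the parametrization by $T\times T\times T\times[0,1]$ together with the Lipschitz bookkeeping is exactly what the paper does. The one genuine difference is your insertion of the $1$-Lipschitz radial projection $P$ onto $B_1(\R^{n_1})$: the paper simply takes $g(x,y,z,r)=f(x)+r(f(y)-f(z))$, whose image is $f(T)+\mathcal C(f(T)-f(T))$ and therefore only \emph{contains} $\widehat T$ once one intersects with the unit ball. That containment is all the paper actually needs downstream (Lemma~\ref{L:moment-chaining} is applied to a superset), so the paper glosses over the mismatch; your projection trick cleanly recovers the equality $g_0(T')=\widehat T$ asserted in the lemma statement at no cost to the Lipschitz constant. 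Your final extension step to all of $\R^{3n_0+1}$ via Kirszbraun (or explicit projections) is also a small tidying not present in the paper, which leaves $g$ defined only on $T'$.
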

\begin{proof}
By definition,
\begin{equation}\label{E:g-def}
\widehat{T} = g(T\times T\times T\times  [0,1]),\qquad g(x,y,z,r) = f(x)+r(f(y)-f(z)).    
\end{equation}
In particular, for some constant $C$ depending on $T_0, \lambda$, we have
\begin{align*}
    \norm{g(x,y,z,r)-g(x',y',z',r')}_2&\leq \norm{f(x)-f(x')}_2 + \norm{f(y)-f(y')}_2+\norm{f(z)-f(z')}_2\\
    &+ \abs{r-r'}\norm{f(y)-f(z)}_2\\
    &\leq \lambda\lr{\norm{x-x'}_2 + \norm{y-y'}_2 + \norm{z-z'}_2 + \mathrm{Diam}(T_0) \abs{r-r_0}}\\
    &\leq C \lambda \norm{(x-x',y-y',z-z',r-r')}_2.
\end{align*}
Hence, $\widehat{T}$ is the image under a Lipschitz map with a  Lipschitz constant depending only on $T_0,\lambda$ of a compact set in $\R^{3n_0+1}$.
\end{proof}

\noindent The second Lemma we need is an elementary inequality. 
\begin{lemma}\label{L:split-abc}
 Let $a,b,c\geq 0$ be real numbers and $k\geq 1$ be an integer. We have
 \[
(a+b+c)^k \leq 2^{2k-1} \lr{1 + a^{2k}}+\frac{1}{4}\left[(2+b)^{4k}+\lr{1+c}^{4k}\right].
\]
\end{lemma}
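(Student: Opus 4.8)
The plan is to reduce the inequality to three separate one-variable estimates, one controlling the contribution of each of $a$, $b$, $c$, after first expanding the $k$-th power by convexity. Since $k\geq 1$, the map $t\mapsto t^k$ is convex on $[0,\infty)$, so Jensen's inequality gives $(a+b+c)^k\leq 3^{k-1}(a^k+b^k+c^k)$. Because $3^{k-1}\leq 4^{k-1}=2^{2k-2}\leq 2^{2k-1}$, it then suffices to prove the three bounds $2^{2k-1}a^k\leq 2^{2k-1}(1+a^{2k})$, $2^{2k-1}b^k\leq \tfrac14(2+b)^{4k}$, and $2^{2k-1}c^k\leq \tfrac14(1+c)^{4k}$, and add them.

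The first bound is immediate from $a^k\leq 1+a^{2k}$ (treat $a\leq 1$ and $a\geq 1$ separately). For the $b$-term I would expand $(2+b)^4 = 16+32b+24b^2+8b^3+b^4\geq 16b$, which holds since all coefficients are nonnegative and $32\geq 16$; hence $(2+b)^{4k}\geq(16b)^k=16^k b^k$, and since $16^k = 4^k\cdot 4^k\geq 4\cdot 4^k = 2^{2k+2}$ for $k\geq 1$, this yields $\tfrac14(2+b)^{4k}\geq 2^{2k}b^k\geq 2^{2k-1}b^k$.

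The $c$-term is the tightest estimate and the only delicate point. I would prove $(1+c)^4\geq 8c$ by writing $(1+c)^4\geq 1+4c+6c^2 = 8c+(6c^2-4c+1)$ and observing $6c^2-4c+1 = 6(c-\tfrac13)^2+\tfrac13>0$; then $(1+c)^{4k}\geq (8c)^k = 2^{3k}c^k$, and $2^{3k}\geq 2^{2k+1}$ for $k\geq 1$ gives $\tfrac14(1+c)^{4k}\geq 2^{3k-2}c^k\geq 2^{2k-1}c^k$. Summing the three bounds finishes the proof. The main obstacle is really just the sharpness of the constant $8$ here: the function $(1+c)^4-8c$ stays positive but dips to roughly $0.44$, so unlike the $a$- and $b$-terms and the convexity step there is little slack, and the negative-discriminant trick is what makes the estimate go through cleanly. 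Note that the hypothesis $k\geq 1$ enters only through elementary numerical inequalities ($3^{k-1}\leq 2^{2k-1}$, $16^k\geq 2^{2k+2}$, $2^{3k}\geq 2^{2k+1}$), so no induction on $k$ is required.
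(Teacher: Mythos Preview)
Your proof is correct, but the route differs from the paper's. The paper does not split via Jensen into three one-variable terms. Instead it first proves a two-variable estimate $(a+b)^k \leq (1+a)^k(1+b)^k \leq \tfrac12\bigl((1+a)^{2k}+(1+b)^{2k}\bigr)$ by the binomial theorem and AM--GM, then sharpens the $a$-term by a case split on $a\lessgtr 1$ to obtain $(a+b)^k \leq 2^{2k-1}(1+a^{2k}) + \tfrac12(1+b)^{2k}$. The three-variable result follows by applying this with $b$ replaced by $b+c$, and then applying the first two-variable estimate once more to $(1+b+c)^{2k}=((1+b)+c)^{2k}$, which produces exactly the $(2+b)^{4k}$ and $(1+c)^{4k}$ terms with the factor $\tfrac14$. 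Your argument is somewhat more direct: after the convexity splitting you only need the three standalone polynomial bounds $a^k\leq 1+a^{2k}$, $(2+b)^4\geq 16b$, and $(1+c)^4\geq 8c$, the last handled by a discriminant computation. The paper's approach has the advantage that the constants and the particular shapes $(2+b)^{4k}$, $(1+c)^{4k}$ emerge automatically from iterating one identity, whereas in your approach those shapes have to be matched by hand (hence the slightly delicate $8c$ bound). On the other hand, your proof avoids the binomial manipulation and makes transparent exactly where the hypothesis $k\geq 1$ is used.
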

\begin{proof}
For any $a,b\geq 0$ we have
\begin{align}
\notag (a+b)^k &=\sum_{j=0}^{k}\binom{k}{j}a^jb^{k-j}\leq \sum_{j=0}^{k}\binom{k}{j}\lr{1+a}^kb^{k-j} = (1+a)^k(1+b)^k\\
\label{E:inq-1}&\leq \frac{1}{2}\lr{(1+a)^{2k} + (1+b)^{2k}}    
\end{align}
Further, breaking into cases depending on whether $0\leq a\leq 1$ or $1\leq a$ we find that
\begin{align}
    \label{E:inq-2}
    (a+b)^k \leq 2^{2k-1} \lr{1 + a^{2k}}+\frac{1}{2}(1+b)^{2k}.
\end{align}
Combining \eqref{E:inq-1} with \eqref{E:inq-2} we see as desired that any $a,b,c\geq 0$
\[
(a+b+c)^k \leq 2^{2k-1} \lr{1 + a^{2k}}+\frac{1}{4}\left[(2+b)^{4k}+\lr{1+c}^{4k}\right].
\]
\end{proof}

\noindent The next Lemma is also an elementary estimate. 

\begin{lemma}\label{L:CS-power}
 Fix an integer $k\geq 1$, and suppose $X_1,\ldots, X_k$ are non-negative random variables. There exists a positive integer $q=q(k) $ such that
 \[
 \E{\prod_{i=1}^k X_i} \leq \lr{\prod_{i=1}^k \E{X_i^q}}^{1/q}.
 \]
\end{lemma}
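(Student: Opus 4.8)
\textbf{Proof proposal for Lemma \ref{L:CS-power}.}

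The plan is to prove this by induction on $k$, repeatedly applying the Cauchy--Schwarz inequality (or more precisely, the two-variable H\"older inequality $\E{XY}\leq \E{X^2}^{1/2}\E{Y^2}^{1/2}$) to peel off one factor at a time, at the cost of doubling the exponent each time a factor is split off. For the base case $k=1$ one takes $q(1)=1$ and the statement is trivial. For the inductive step, write $\prod_{i=1}^k X_i = X_k \cdot \prod_{i=1}^{k-1} X_i$ and apply Cauchy--Schwarz to get $\E{\prod_{i=1}^k X_i} \leq \E{X_k^2}^{1/2}\,\E{(\prod_{i=1}^{k-1}X_i)^2}^{1/2} = \E{X_k^2}^{1/2}\,\E{\prod_{i=1}^{k-1}X_i^2}^{1/2}$. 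Now apply the inductive hypothesis to the $k-1$ nonnegative random variables $X_1^2,\ldots,X_{k-1}^2$: there is $q'=q(k-1)$ with $\E{\prod_{i=1}^{k-1}X_i^2}\leq \lr{\prod_{i=1}^{k-1}\E{X_i^{2q'}}}^{1/q'}$. Substituting and setting $q(k) := 2q(k-1)$ (so in fact $q(k)=2^{k-1}$), one sees that every term $\E{X_i^{q(k)}}$ appears to the power $1/q(k)$ after the bookkeeping, which gives the claim.

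The only thing to be careful about is the exponent bookkeeping: after the Cauchy--Schwarz step the factor $X_k$ carries exponent $2$ and weight $1/2$, while each $X_i^2$ for $i<k$ eventually carries exponent $2q'$ and weight $\frac{1}{2}\cdot\frac{1}{q'} = \frac{1}{2q'} = \frac{1}{q(k)}$, which matches; and $\E{X_k^2}^{1/2} = \E{X_k^{q(k)}}^{1/q(k)}$ only if we instead bound $\E{X_k^2}^{1/2}\leq \E{X_k^{q(k)}}^{1/q(k)}$ using monotonicity of $L^p$ norms (valid since $q(k)\geq 2$, so $\norm{X_k}_2\leq\norm{X_k}_{q(k)}$, with the convention $\E{Y^p}^{1/p}$ read as $+\infty$ when the moment is infinite, in which case the inequality is vacuous). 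The same monotonicity remark is needed for the $i<k$ factors if one prefers a uniform exponent. Alternatively, and perhaps more cleanly, one can avoid monotonicity entirely by simply taking $q(k)=k$ and invoking the general H\"older inequality $\E{\prod_{i=1}^k X_i}\leq \prod_{i=1}^k \E{X_i^k}^{1/k}$ directly; I would likely present this one-line H\"older argument as the main proof and mention the inductive Cauchy--Schwarz version as an alternative.

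I do not expect any genuine obstacle here --- this is a standard moment inequality. The only "hard part" is purely cosmetic: deciding whether to state $q(k)$ explicitly (either $k$ via H\"older, or $2^{k-1}$ via iterated Cauchy--Schwarz) or leave it as an unspecified $q=q(k)$, and making sure the degenerate case of infinite moments is handled by the usual convention that renders the inequality trivially true.
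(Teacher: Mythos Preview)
Your proposal is correct, and in fact both of the arguments you sketch work. However, neither is quite the route taken in the paper. The paper also argues by induction using only Cauchy--Schwarz, but instead of peeling off one factor at a time it splits the product $\prod_{i=1}^{K+1}X_i$ into two halves of sizes $\lceil (K+1)/2\rceil$ and $\lfloor (K+1)/2\rfloor$, applies Cauchy--Schwarz once, and then invokes the inductive hypothesis on each half; the resulting exponent is $2\max\{q(\lceil(K+1)/2\rceil),\,q(\lfloor(K+1)/2\rfloor)\}$. The advantage of the paper's halving scheme over your first approach is that both halves inherit the full inductive exponent, so no appeal to monotonicity of $L^p$ norms is needed to reconcile the exponent on the peeled-off factor with the rest. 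Your second approach, invoking the $k$-fold H\"older inequality directly with $q(k)=k$, is cleaner than either inductive argument and yields the smallest $q$; the paper presumably avoids it only to keep the toolbox limited to Cauchy--Schwarz. Any of the three is perfectly adequate here since the lemma only claims existence of some $q(k)$.
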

\begin{proof}
The proof is by induction on $k$. For the base cases when $k=1$, we may take $q=1$ and when $k=2$ we may take $q=2$ by Cauchy-Schwartz. Now suppose we have proved the claim for all $k=1,2,\ldots, K$ for some $K\geq 3$. Note that $1\leq v \lceil{(K+1)/2}\rceil\leq K$. So we may use Cauchy-Schwartz and the inductive hypothesis to obtain
\begin{align*}
   \E{\prod_{i=1}^{K+1} X_i}  &=  \E{\prod_{i=1}^{\lceil \frac{K+1}{2}\rceil } X_i \prod_{i=\lceil \frac{K+1}{2}\rceil+1}^K X_i}\\
   &\leq  \E{\prod_{i=1}^{\lceil \frac{K+1}{2}\rceil } X_i^2}^{1/2} \E{\prod_{i=\lceil \frac{K+1}{2}\rceil+1}^K X_i^2}^{1/2}\\
   &\leq \lr{\prod_{i=1}^{K+1} \E{X_i^{2q}}}^{1/2q},
\end{align*}
where $q=\max\set{q\lr{\lceil \frac{1}{2}(K+1)\rceil}, q\lr{K-\lceil \frac{1}{2}(K+1)\rceil-1}}$.
\end{proof}

\noindent The next Lemma is an elementary result about the moments of marginals of iid random vectors. 
\begin{lemma}\label{L:marginals}
 Fix an even integer $p\geq 2$ a positive integer, and suppose $\mu$ is a probability measure on $\R$  with mean $0$ and finite higher moments. Assume also that $w=\lr{w_1,\ldots, w_n}$ is a vector with iid components, each with distribution $\mu$. Then, there exists a constant $C$ depending only on $p$ and first $p$ moments of $\mu$ such that for all $n\geq 1$
\[
\sup_{\norm{u}=1} \E{\abs{w\cdot u}^{p}}\leq C.
\]
\end{lemma}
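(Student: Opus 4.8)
The plan is to expand $\E{\abs{w\cdot u}^p}$ as a multinomial sum over the components of $w$ and exploit the mean-zero hypothesis to kill most terms. Write $w\cdot u=\sum_{j=1}^n w_j u_j$ and raise to the $p$-th power:
\[
\E{\abs{w\cdot u}^p}=\E{\lr{\sum_{j=1}^n w_j u_j}^p}=\sum_{j_1,\ldots,j_p=1}^n u_{j_1}\cdots u_{j_p}\,\E{w_{j_1}\cdots w_{j_p}},
\]
using that $p$ is even so $\abs{w\cdot u}^p=(w\cdot u)^p$. Since the $w_j$ are independent with mean zero, the expectation $\E{w_{j_1}\cdots w_{j_p}}$ vanishes unless every distinct index among $j_1,\ldots,j_p$ appears at least twice. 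Thus only multi-indices whose underlying partition has all blocks of size $\geq 2$ survive, and in particular at most $p/2$ distinct indices occur.

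First I would group the surviving terms by the set partition $P$ of $\{1,\ldots,p\}$ recording which positions carry equal indices; for a partition into blocks of sizes $m_1,\ldots,m_r$ (each $m_s\geq 2$, $\sum m_s=p$, $r\leq p/2$) the contribution is bounded by $\prod_s \abs{M_{m_s}}$ where $M_k:=\E{\abs{w_1}^k}$ is finite by hypothesis, times the sum over distinct indices $k_1,\ldots,k_r$ of $\prod_s \abs{u_{k_s}}^{m_s}$. Since $m_s\geq 2$ and $\norm{u}_2=1$, we have $\sum_{k} \abs{u_k}^{m_s}\leq \lr{\sum_k u_k^2}=1$, so the index sum is at most $1$. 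Hence
\[
\E{\abs{w\cdot u}^p}\leq \sum_{P}\prod_{s=1}^{r(P)}\abs{M_{m_s(P)}},
\]
and the number of such partitions $P$ depends only on $p$, while each factor $M_k$ with $k\leq p$ depends only on $p$ and the first $p$ moments of $\mu$. This yields the constant $C=C(p,\mu)$ uniformly in $n$ and in $u$ with $\norm u=1$, as claimed. One can record this more cleanly by noting $\E{\abs{w\cdot u}^p}\leq p!\sum_{r\leq p/2}\binom{M_2 + \cdots}{}$-type bound, but the partition count is all that is needed.

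The only point requiring care — and the mild obstacle — is the bookkeeping that ensures the bound is genuinely uniform in $n$: one must check that although the number of multi-indices grows with $n$, the sum over distinct index tuples of $\prod_s\abs{u_{k_s}}^{m_s}$ stays bounded, which is exactly where $\ell^{m_s}\leq \ell^2$ for $m_s\geq 2$ together with $\norm u_2=1$ is used. No probabilistic input beyond independence, mean zero, and finiteness of the first $p$ moments of $\mu$ is needed, and the bound is clearly monotone enough that the same $C$ works for all $n\geq 1$.
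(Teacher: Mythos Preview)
Your proof is correct and takes a genuinely different route from the paper's. The paper invokes a moment estimate of {\L}ata{\l}a for sums of independent random variables, namely
\[
\E{\abs{\sum_i X_i}^p}^{1/p}\simeq \inf\Bigl\{t>0:\ \sum_i \log \E{\abs{1+X_i/t}^p}\leq p\Bigr\},
\]
and then bounds the log-sum on the right by Taylor-expanding $(1+1/t)^p$ and using $\sum_i u_i^2=1$. Your argument is the classical direct one: expand $(w\cdot u)^p$ multinomially, kill all terms in which some index appears exactly once using $\E{w_1}=0$, group the survivors by the set partition of $\{1,\ldots,p\}$ they induce, bound the moment factor by $\prod_s M_{m_s}$, and bound the index sum by $\prod_s \sum_k |u_k|^{m_s}\leq 1$ via $|u_k|^{m_s}\leq u_k^2$ for $m_s\geq 2$.

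Your approach is more elementary and fully self-contained; it also makes transparent that only the first $p$ absolute moments of $\mu$ enter the constant. The paper's route, by contrast, imports sharper machinery than is really needed here (\L ata\l a's result gives two-sided bounds and handles much more general situations), at the cost of a slightly more involved computation. For the purposes of this lemma your argument is the cleaner choice. Two cosmetic remarks: the absolute value on $M_{m_s}$ in your displayed bound is redundant since $M_{m_s}\geq 0$, and the phrase ``$\ell^{m_s}\leq \ell^2$'' in your final paragraph presumably means $|u_k|^{m_s}\leq u_k^2$.
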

\begin{proof}
We will use the following result of \L ata\l a \cite[Thm. 2, Cor. 2, Rmk. 2]{latala1997estimation}. Suppose $X_i$ are independent random variables and $p$ is a positive even integer. Then 
\begin{equation}\label{E:latala}
\E{\abs{\sum_{i=1}^n X_i}^p} \simeq \inf\set{t>0~\bigg|~\sum_{i=1}^n \log \E{\abs{1+\frac{X_i}{t}}^p}\leq p},    
\end{equation}
where $\simeq$ means bounded above and below up to universal multiplicative constants. Let us fix a unit vector $u=\lr{u_1,\ldots, u_n}\in S^{n-1}$ and apply this to $X_i = u_i w_i$. Since $w_i$ have mean $0$ and $p$ is even we find
\begin{align*}
    \sum_{i=1}^n \log \E{\abs{1+\frac{X_i}{t}}^p} &\leq \sum_{i=1}^n \log\lr{1+ \sum_{k=2}^p \binom{p}{k} \frac{\abs{u_i}^k\E{\abs{w_1}^k}}{t^k}}.%\\
    %&\leq \sum_{i=1}^n \log\lr{1+ u_j^2 \E{w_i^p}\sum_{k=2}^p \binom{p}{k} \frac{u_i^k\E{w_i^k}}{t^k}}\\
\end{align*}
Note that for each $k=2,\ldots, p$ we have
\[
\E{\abs{w_1}^k} \leq \E{(1+\abs{w_1})^p}\qquad \text{and}\qquad \abs{u_i}^k \leq u_i^2.
\]
Hence, using that $\log(1+x)\leq x$ we find
\begin{align*}
    \sum_{i=1}^n \log \E{\abs{1+\frac{X_i}{t}}^p} &\leq \sum_{i=1}^n \log\lr{1+ u_i^2 \E{(1+\abs{w_1})^p}\sum_{k=2}^p \binom{p}{k} \frac{1}{t^k}}\\
    &= \sum_{i=1}^n \log\lr{1+ u_i^2 \E{(1+\abs{w_1})^p}\left[\lr{1+\frac{1}{t}}^p - 1 - \frac{p}{t}\right]}\\
    &\leq\E{(1+\abs{w_1})^p}\left[\lr{1+\frac{1}{t}}^p - 1 - \frac{p}{t}\right].
\end{align*}
Note that for $2<t$, there is a universal constant $C>0$ so that 
\[
\abs{\lr{1+\frac{1}{t}}^p - 1 - \frac{p}{t}} \leq \frac{Cp^2}{t^2}.
\]
Thus, there exists a constant $C'>0$ so that
\[
t> C'\sqrt{p\E{(1+\abs{w_1})^p})}\quad \Rightarrow \quad \sum_{i=1}^n \log \E{\abs{1+\frac{X_i}{t}}^p} \leq p.
\]
Combining this with \eqref{E:latala} completes the proof. 
\end{proof}

\noindent The final Lemma we need is an integrability statement for the  supremum of certain non-Gaussian fields over low-dimensional sets.

\begin{lemma}\label{L:moment-chaining}
Fix a positive integer $n_0$, an even integer $k\geq 1$, a compact set $T_0\subseteq \R^{n_0}$, a constant $\lambda >0$, and a probability measure $\mu$ on $\R$ with mean $0$, variance $1,$ and finite higher moments. For every $\epsilon \in (0,1)$ there exists a constant $C=C(T_0, \epsilon,\lambda, n_0, k,\mu)$ with the following property. Fix any integer $n_1\geq 1$ and a $\lambda$-Lipschitz map $f:\R^{n_0}\gives \R^{n_1}$. Define $T_1:=f(T_0)$, and let $w=\lr{w_1,\ldots, w_{n_1}}$ be a vector with iid components $w_i$, each drawn from $\mu$. Then, for any fixed $y_0\in T_1$
\begin{equation}\label{E:sup-moments}
\E{\sup_{y\in T_1} (w\cdot (y-y_0))^k}\leq C.    
\end{equation}
\end{lemma}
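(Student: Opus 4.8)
The plan is to bound the supremum by a chaining argument adapted to the fact that $T_1 = f(T_0)$ is the Lipschitz image of a compact subset of a low-dimensional space $\R^{n_0}$, so it has controlled metric entropy, while the field $y \mapsto w\cdot(y-y_0)$ has subexponential — not subgaussian — increments because the weights are only assumed to have finite moments. First I would record the key one-dimensional input: by Lemma \ref{L:marginals}, for the even integer $p$ there is a constant $C_p$ depending only on $p$ and the first $p$ moments of $\mu$ with $\sup_{\norm{u}=1}\E{\abs{w\cdot u}^p}\leq C_p$ for all $n_1$; equivalently, for any $y,y'\in\R^{n_1}$, $\E{\abs{w\cdot(y-y')}^p}\leq C_p\norm{y-y'}_2^p$. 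Note also that $T_1$ has Euclidean diameter at most $\lambda\,\mathrm{Diam}(T_0)=:D$, which is independent of $n_1$, because $f$ is $\lambda$-Lipschitz.

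\medskip
Next I would run a standard nets/chaining scheme. Fix a dyadic sequence of $\delta_m=2^{-m}D$-nets $N_m$ of $T_0$ for $m\geq 0$, each of cardinality at most $(C/\delta_m)^{n_0}$ by compactness and the bound on $\mathrm{Diam}(T_0)$; since $f$ is $\lambda$-Lipschitz, $f(N_m)$ is a $\lambda\delta_m$-net of $T_1$ of the same cardinality. For $y\in T_1$ write a telescoping decomposition $y - y_0 = \sum_{m\geq 1}(\pi_m(y)-\pi_{m-1}(y))$ along successive net points, with $\pi_0(y)=y_0$, where consecutive links have length $\lesssim \lambda\delta_{m-1}\simeq 2^{-m}\lambda D$ and the number of such links at level $m$ is at most $\abs{f(N_m)}\cdot\abs{f(N_{m-1})}\leq (C2^m/D)^{2n_0}$. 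Applying the moment bound above with exponent $p$ to each link and a union bound over links gives, for each level $m$, a tail/moment control of $\sup_y \abs{\pi_m(y)-\pi_{m-1}(y)}$: taking $p$ large enough (depending on $n_0$) so that the entropy growth $2^{2n_0 m}$ is beaten by the increment decay $2^{-mp}$, one gets $\E{\sup_y \abs{\pi_m(y)-\pi_{m-1}(y)}^k}^{1/k}\lesssim 2^{-cm}$ for some $c>0$ — here one needs $p \geq k$ and $p > 2n_0 + \text{(something)}$, which is allowed since $\mu$ has all moments. Summing over $m$ via Minkowski's inequality in $L^k$ yields $\E{\sup_{y\in T_1}\abs{w\cdot(y-y_0)}^k}^{1/k}\leq C$, hence $\E{\sup_{y\in T_1}(w\cdot(y-y_0))^k}\leq \E{\sup_{y\in T_1}\abs{w\cdot(y-y_0)}^k}\leq C^k$, with $C$ depending only on $T_0,\lambda,n_0,k,\mu$ and crucially \emph{not} on $n_1$ or on $f$ beyond its Lipschitz constant. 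Strictly, one also adds the single term $\E{\abs{w\cdot(\pi_1(y)-y_0)}^k}\lesssim (\lambda D)^k$ coming from the coarsest scale.

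\medskip
The main obstacle is the $n_1$-uniformity: a naive application of a Gaussian-type chaining bound fails because the increments $w\cdot(y-y')$ are not subgaussian when $\mu$ is heavy-tailed, and the dimension $n_1$ over which $w$ lives tends to infinity. The two devices that resolve this are (i) working with a \emph{fixed finite} moment $p$ rather than with exponential Orlicz norms — Lemma \ref{L:marginals} is exactly what makes $\E{\abs{w\cdot u}^p}$ bounded uniformly in $n_1$ for unit $u$ — and (ii) exploiting that $T_1$, though sitting in $\R^{n_1}$, is the Lipschitz image of the \emph{fixed-dimensional} compact $T_0$, so its covering numbers at scale $\epsilon$ are $\lesssim (\lambda\,\mathrm{Diam}(T_0)/\epsilon)^{n_0}$, a bound free of $n_1$. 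Once these two facts are in hand, the chaining is routine: the polynomial entropy is controlled by choosing the moment exponent $p$ large enough in terms of $n_0$ and $k$, and Minkowski's inequality sums the geometric series. A minor technical point to handle carefully is that a union bound over $\abs{N_m}^2$ links costs a factor that must be absorbed by the $\delta_m^p$ decay, so the choice of $p$ must be made \emph{after} fixing $n_0$ and $k$; this is fine because all moments of $\mu$ are finite by hypothesis.
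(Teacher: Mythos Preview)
Your proposal is correct and follows essentially the same chaining strategy as the paper: both use that $T_1$, as a $\lambda$-Lipschitz image of $T_0\subseteq\R^{n_0}$, has covering numbers bounded independently of $n_1$, and both combine a union bound over net links with the uniform moment bound of Lemma~\ref{L:marginals}, choosing the moment order $p$ large in terms of $n_0$ and $k$ to beat the polynomial entropy. The only cosmetic difference is that you sum the chaining series via Minkowski's inequality in $L^k$, whereas the paper expands $(\sum_q \cdots)^k$ as a multi-sum and decouples the resulting products with the repeated Cauchy--Schwarz Lemma~\ref{L:CS-power}; both routes yield the same geometric series.
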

\begin{proof}
The proof is a standard chaining argument. For each $y\in T_1$ write $\Pi_k(y)$ for the closest point to $y$ in a $2^{-k}$ net in $T_1$ and assume without loss of generality that the diameter of $T_1$ is bounded above by $1$ and that $\Pi_0(y)=y_0$ for all $y\in T_1$. We have using the usual chaining trick that
\begin{align}
    \label{E:chaining-ub}\E{\lr{\sup_{y\in T_1} w\cdot (y-y_0)}^k}&\leq  \sum_{q_1,\ldots, q_k=0}^\infty \E{\prod_{i=1}^k \sup_{y_i\in T_1} \abs{w\cdot (\Pi_{q_i}(y_i)-\Pi_{q_i-1}(y_i))}}.
\end{align}
By Lemma \ref{L:CS-power}, there exists $q$ depending only on $k$ so that for any $q_1,\ldots, q_k$ we have
\begin{equation}\label{E:prod-est}
\E{\prod_{i=1}^k \sup_{y_i\in T_1} \abs{w\cdot (\Pi_{q_i}(y_i)-\Pi_{q_i-1}(y_i))}}\leq \prod_{i=1}^k \lr{\E{ \sup_{y\in T_1} \abs{w\cdot (\Pi_{q_i}(y)-\Pi_{q_i-1}(y))}^{q}}}^{1/q}.
\end{equation}
We seek to bound each expectation on the right hand side in \eqref{E:prod-est}. To do this, write
\[
\E{ \sup_{y\in T_1} \abs{w\cdot (\Pi_{q_i}(y)-\Pi_{q_i-1}(y))}^{q}} = \int_0^\infty \mathbb P\lr{\sup_{y\in T_1} \abs{w\cdot (\Pi_{q_i}(y)-\Pi_{q_i-1}(y))}^{q}> t}dt.
\]
Note that the supremum is only over a finite set of cardinality at most
\[
\abs{\Pi_{q_i}}\abs{\Pi_{q_{i-1}}}\leq 2^{cn_0q_i}
\]
for some $c>0$ depending only $T_0,\lambda$. This is because, by assumption $T_1$ is the image of $T_0$ under a $\lambda$-Lipschtiz map and Lipschitz maps preserve covering numbers. Thus, by a union bound, 
\[
 \mathbb P\lr{\sup_{y\in T_1} \abs{w\cdot (\Pi_{q_i}(y)-\Pi_{q_i-1}(y))}^{q}> t}\leq 2^{cn_0q_i} \sup_{y\in T_1}\mathbb P\lr{ \abs{w\cdot (\Pi_{q_i}(y)-\Pi_{q_i-1}(y))}^{q}> t}
\]
But for any $y\in T_1$ and any $s>0,p\geq 1$ we have
\begin{align*}
  \Prob{\abs{w\cdot (\Pi_q(y)-\Pi_{q-1}(y))}^{q} > s} &\leq \sup_{\norm{u}=1} \E{\abs{w\cdot u}^p} \lr{\frac{\norm{\Pi_k(y)-\Pi_{k-1}(y)}^p}{s^{p/q}}}\\
  &= 2^{-pq_i} s^{-p/q}\sup_{\norm{u}=1}\E{\abs{w\cdot u}^p}.
\end{align*}
Putting this all together we find for any $p\geq 2\max\set{q+2,cn_0}$ that
\[
\E{ \sup_{y\in T_1} \abs{w\cdot (\Pi_{q_i}(y)-\Pi_{q_i-1}(y))}^{q}} \leq 2^{-cn_0q_i} \sup_{\norm{u}=1}\E{\abs{w\cdot u}^p}.
\]
Thus, substituting this into \eqref{E:chaining-ub} yields
\begin{align*}
    \E{\lr{\sup_{y\in T_1} w\cdot (y-y_0)}^k}&\leq \sup_{\norm{u}=1}\E{\abs{w\cdot u}^p} \sum_{q_1,\ldots, q_k=0}^\infty 2^{-cn_0\sum_{i=1}^k q_i}\leq 2^k \sup_{\norm{u}=1}\E{\abs{w\cdot u}^p}.
\end{align*}
Appealing to Lemma \ref{L:marginals} completes the proof of Lemma \ref{L:moment-chaining}.
\end{proof}

\subsubsection{Proof of Proposition \ref{P:tightness} Using Lemmas from \S \ref{S:prep-lem}}\label{S:tightness-proof-2}

Let us first establish the equi-Lipschitz condition, which we recall states that for each $\epsilon\in (0,1)$ and each compact set $T\subseteq \R^{n_0}$ there exist $C>0$ so that with probability at least $1-\epsilon$ we have
\begin{equation}\label{E:equi-lip}
\sup_{x_\alpha,x_\beta\in T}\frac{ \norm{z_\alpha^{(L+1)}-z_\beta^{(L+1)}}_2}{\norm{x_\alpha-x_\beta}_2}\leq C.    
\end{equation}
%Since $n_0,n_{L+1}, L$ are finite, it suffices to show that there exists $C>0$ so that
%\begin{equation}\label{E:tightness-goal}
%\sup_{x_\alpha,x_\beta\in K}\frac{ n_{\ell+1}^{-1/2}\norm{z_\alpha^{(\ell+1)}-z_\beta^{(\ell+1)}}_2}{n_{\ell}^{-1/2}\norm{z_\alpha^{(\ell)}-z_\beta^{(\ell)}}_2}\leq C,\qquad \forall \ell = 0,\ldots, L.    
%\end{equation}
For \eqref{E:equi-lip} to hold, %we must show that with high probability, each of the normalized layer maps $n_{\ell}^{-1/2}z_\alpha^{(\ell)}\mapsto n_{\ell+1}^{-1/2}z_\alpha^{(\ell+1)}$ is Lipschitz (uniformly in $n_1,\ldots,n_L$) with high probability. 
we need a result about the Lipschitz constant of each layer. To ease the notation define a normalized single layer with random weights $W$ and random biases $b$  via the map $\psi:\R^{n_1}\gives\R^{n_2}$:
\begin{equation}\label{E:psi-def}
\psi(x;W,b) = \frac{1}{\sqrt{n_2}}\sigma\lr{Wx+b},    
\end{equation}
where $b\sim \mN(0,C_b\mathrm{I}_{n_2})$ and $W=\lr{w_{ij}}\in \R^{n_2\times n_1}$ with $w_{ij}$ drawn iid from a distribution with mean $0$, variance $1$, and finite higher moments. We choose the variance of $w_{ij}$ to be $1$ instead of $C_W/n_1$ since we will later think of $x$ as the normalized vector $(C_W/n_\ell)^{1/2}\sigma(z_\alpha^{(\ell)})$ of post-activations in a given layer.
\begin{lemma}\label{L:layer-lip}
Fix an integer $n_0\geq 1$, a compact set $T_0\subseteq \R^{n_0}$, and a constant $\lambda>0$. For every $\epsilon \in (0,1)$ there exists a constant $C=C(\epsilon, n_0, T_0, \sigma, \lambda)$ with the following property. Fix any integers $n_1,n_2\geq 1$, and define $\psi:\R^{n_1}\gives \R^{n_2}$ as in \eqref{E:psi-def}. Suppose that $T_1\subseteq \R^{n_1}$ is the image of $T_0$ under a $\lambda$-Lipschitz map from $\R^{n_0}$ to $\R^{n_1}$. Then,
\[
\mathbb P\lr{\sup_{x_\alpha,x_\beta\in T_1}\frac{\norm{\psi(x_\alpha)-\psi(x_\beta)}_2}{\norm{x_\alpha-x_\beta}_2} \leq C}\geq 1-\epsilon.
\]
\end{lemma}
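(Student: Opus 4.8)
The plan is to dominate the random Lipschitz constant of $\psi$ on $T_1$ by a single i.i.d.\ average whose mean is bounded uniformly in the widths. Write $f$ for the $\lambda$-Lipschitz map with $T_1=f(T_0)$, write $w_i$ for the $i$-th row of $W$, fix a base point $x_0\in T_1$, and let $\widehat T\supseteq T_1$ be the set produced from $f$ and $T_0$ by Lemma~\ref{L:lip-image}, so that $\widehat T=g(T')$ for a compact $T'\subseteq\R^{3n_0+1}$ and a $C\lambda$-Lipschitz map $g$ depending only on $T_0,\lambda$. For $x_\alpha,x_\beta\in T_1$ put $u_i=w_i\cdot x_\alpha+b_i$ and $v_i=w_i\cdot x_\beta+b_i$; after enlarging $k$ to an even integer, absolute continuity of $\sigma$ together with $|\sigma'(t)|\le C_0(1+|t|^k)$ gives $|\sigma(u_i)-\sigma(v_i)|\le C_0|w_i\cdot(x_\alpha-x_\beta)|(1+\max(|u_i|,|v_i|)^k)$. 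Both pair-dependent factors are controlled by $\widehat T$ alone: the unit secant direction $(x_\alpha-x_\beta)/\norm{x_\alpha-x_\beta}_2$ is a nonnegative multiple of a difference $f(a)-f(b)$ of norm $1$, hence lies in $\mathcal C(f(T_0)-f(T_0))\cap B_1(\R^{n_1})$, so $x_0+(x_\alpha-x_\beta)/\norm{x_\alpha-x_\beta}_2\in\widehat T$ and therefore $|w_i\cdot(x_\alpha-x_\beta)|\le S_i\norm{x_\alpha-x_\beta}_2$ with $S_i:=\sup_{y\in\widehat T}|w_i\cdot(y-x_0)|$; and $T_1\subseteq\widehat T$ gives $\max(|u_i|,|v_i|)\le M_i:=|b_i|+|w_i\cdot x_0|+S_i$. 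Summing over $i$ and dividing by $\norm{x_\alpha-x_\beta}_2^2$ then yields the pair-independent bound
\[
\sup_{x_\alpha\ne x_\beta\in T_1}\frac{\norm{\psi(x_\alpha)-\psi(x_\beta)}_2^2}{\norm{x_\alpha-x_\beta}_2^2}\ \le\ \Phi\ :=\ \frac{C_0^2}{n_2}\sum_{i=1}^{n_2}S_i^2\,(1+M_i)^{2k}.
\]

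The summands $S_i^2(1+M_i)^{2k}$ depend only on the pair $(w_i,b_i)$, hence are i.i.d., so $\E{\Phi}=C_0^2\,\E{S_1^2(1+M_1)^{2k}}$ is independent of $n_2$, and Markov's inequality reduces the lemma to showing $\E{S_1^2(1+M_1)^{2k}}\le\overline C$ for a constant $\overline C$ depending only on the data in the statement: then $\Prob{\Phi>C_0^2\overline C/\epsilon}\le\epsilon$, and one takes $(C_0^2\overline C/\epsilon)^{1/2}$ as the constant in the lemma. To bound this expectation I would expand the polynomial in $|b_1|$, $|w_1\cdot x_0|$ and $S_1$ and apply Lemma~\ref{L:CS-power} to reduce to finitely many moments: $\E{|b_1|^q}$ is a finite Gaussian moment; $\E{|w_1\cdot x_0|^q}\le\norm{x_0}_2^q\sup_{\norm{u}_2=1}\E{|w_1\cdot u|^q}$ is finite by Lemma~\ref{L:marginals}; and $\E{S_1^q}=\E{\sup_{y\in\widehat T}(w_1\cdot(y-x_0))^q}$ for even $q$ is bounded, by Lemma~\ref{L:moment-chaining} applied to $\widehat T=g(T')$ with input dimension $3n_0+1$, map $g$, exponent $q$ and base point $x_0$, by a constant independent of $n_1$.

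The main obstacle is exactly this last estimate: one must control $S_i=\sup_{y\in\widehat T}|w_i\cdot(y-x_0)|$ in a way that does not blow up as $n_1\to\infty$. The naive bound $|w_i\cdot v|\le\norm{w_i}_2\norm{v}_2$ is useless here, since $\norm{w_i}_2$ is of order $\sqrt{n_1}$; the entire point of passing to $\widehat T$ via Lemma~\ref{L:lip-image} is that $\widehat T$ is a Lipschitz image of a compact set of \emph{fixed} dimension $3n_0+1$, which makes the chaining estimate of Lemma~\ref{L:moment-chaining} applicable and produces moment bounds for $S_i$ uniform in the hidden widths. Everything else --- the fundamental-theorem-of-calculus bound on $\sigma$, the reduction to i.i.d.\ summands, and Markov's inequality --- is routine. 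One caveat is that the bound on $\E{|w_1\cdot x_0|^q}$ involves $\norm{x_0}_2\le\sup_{x\in T_1}\norm{x}_2$, which in the application of this lemma in \S\ref{S:tightness-proof-2} (where $T_1$ is a normalized post-activation image) is itself controlled with high probability by a collective observable and is carried along in the induction there.
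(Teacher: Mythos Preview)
Your argument is correct and follows the same strategy as the paper's proof: use absolute continuity of $\sigma$ to bound the squared Lipschitz ratio by an i.i.d.\ average over neurons, apply Markov's inequality, and control the common expectation by exploiting that $\widehat T$ is a Lipschitz image of a fixed-dimensional compact set (Lemma~\ref{L:lip-image}) so that the chaining estimate of Lemma~\ref{L:moment-chaining} gives width-independent moment bounds. Your packaging via the single quantity $S_i$ --- absorbing both the unit-secant supremum and the location supremum by noting $x_0+(x_\alpha-x_\beta)/\norm{x_\alpha-x_\beta}_2\in\widehat T$ --- is slightly more economical than the paper's two separate suprema over $\widehat T$ and $\widetilde T$, and your explicit caveat about the hidden dependence on $\norm{x_0}_2$ is well taken: the paper's proof silently folds the term $\E{|W_1\cdot y_0|^p}$ into its constant $C'$ without listing this dependence.
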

\begin{proof}
Fix $x_\alpha\neq x_\beta\in T_1$ and define
\[
\xi_{\alpha\beta}=\frac{x_\alpha-x_\beta}{\norm{x_\alpha-x_\beta}_2}.
\]
Write $W_i$ for the $i$-th row of $W$  and $b_i$ for the $i$-th component of $b$. Since $ab\leq \frac{1}{2}(a^2+b^2)$ and  $\sigma$ is absolutely continuous, we have
\begin{align}
  \notag  \norm{\psi(x_\alpha)-\psi(x_\beta)}_2^2&\quad= \frac{1}{n_2}\sum_{i=1}^{n_2} \lr{\sigma\lr{W_i\cdot x_\alpha + b_i}-\sigma\lr{W_i\cdot x_\alpha + b_i}}^2\\
  \notag  &\quad=\frac{1}{n_2}\sum_{i=1}^{n_2} \lr{ W_i\cdot \xi_{\alpha\beta} \int_{0}^{\norm{x_\alpha-x_\beta}_2} \sigma'\lr{W_i\cdot \lr{x_\beta +t\xi_{\alpha\beta}} + b_i}dt}^2\\
\notag    &\quad \leq \norm{x_\alpha-x_\beta}_2^2 \frac{1}{n_2}\sum_{i=1}^{n_2} \sup_{y\in \widehat{T}}\lr{\sigma'\lr{W_i\cdot y + b_i}}^2 \sup_{\xi\in \twiddle{T}} \lr{W_i\cdot \xi}^2\\
 \label{E:diff-est-1}&\quad \leq \norm{x_\alpha-x_\beta}_2^2 \frac{1}{n_2}\sum_{i=1}^{n_2}\left[ \sup_{y\in \widehat{T}}\lr{\sigma'\lr{W_i\cdot y + b_i}}^4+ \sup_{\xi\in \twiddle{T}} \lr{W_i\cdot \xi}^4\right],
\end{align}
where we've set
\[
\twiddle{T} = \mathcal C(T_1-T_1)\cap S^{n_1-1}\qquad \text{and}\qquad 
\widehat{T} := T_1+\mathcal C(T_1-T_1)\cap B_1(\R^{n_1}),
\]
and have denoted by $\mathcal C(A)$ the cone over a set $A$ and by $B_1(\R^{n_1})$ the unit ball in $\R^{n_1}$. The estimate \eqref{E:diff-est-1} yields
\begin{align*}
    &\Prob{\sup_{x_\alpha,x_\beta\in T}\frac{\norm{\psi(x_\alpha)-\psi(x_\beta)}_2^2}{\norm{x_\alpha-x_\beta}_2^2} > C}\\
    &\qquad\leq \Prob{\frac{1}{n_2}\sum_{i=1}^{n_2}\left[ \sup_{y\in \widehat{T}}\lr{\sigma'\lr{W_i\cdot y + b_i}}^4+ \sup_{\xi\in \twiddle{T}} \lr{W_i\cdot \xi}^4\right] > C}
%&\leq \Prob{\frac{1}{n_2}\sum_{i=1}^{n_2}\sup_{y\in \widehat{T}} \abs{W_i\cdot y + b_i}^k > C-1}.
\end{align*}
Since $\sigma'$ is polynomially bounded by assumption \eqref{E:sigma-prop}, we find by Markov's inequality that there exists an even integer $k\geq 2$ so that for any $C>1$ 
\begin{align}
\Prob{\sup_{x_\alpha,x_\beta\in T}\frac{\norm{\psi(x_\alpha)-\psi(x_\beta)}^2}{\norm{x_\alpha-x_\beta}^2} > C}  
\label{E:lip-est-1}&\leq \frac{1+\E{\sup_{y\in \widehat{T}} \abs{W_1\cdot y + b_1}^k + \sup_{\xi\in \twiddle{T}} \abs{W_1\cdot \xi}^4}}{C-1}.
\end{align}
Our goal is now to show that the numerator in \eqref{E:lip-est-1} is bounded above by a constant that depends only on $T_0, n_0, \lambda$. For this, let us fix any $y_0\in \widehat{T}$ and apply Lemma \ref{L:split-abc} as follows:
\begin{align*}
\abs{W_1\cdot y + b_1}^k& = \abs{W_1\cdot (y-y_0) + W_1\cdot y_0 + b_1}^k\\
&\leq 2^{2k-1}\lr{1+\abs{W_1\cdot (y-y_0)}^{2k}}+\frac{1}{4}\left[(2+\abs{W_1\cdot y_0}^{4k})+\lr{1+\abs{b_1}}^{4k}\right]. 
\end{align*}
Substituting this and the analogous estimate for $\abs{W\cdot \xi}^4$ into \eqref{E:lip-est-1}, we see that since all moments of the entries of the weights and biases exist, there exists a constant $C'>0$ depending on $\lambda, T_0,k$ so that 
\begin{align}
\notag&\Prob{\sup_{x_\alpha,x_\beta\in T}\frac{\norm{\psi(x_\alpha)-\psi(x_\beta)}^2}{\norm{x_\alpha-x_\beta}^2} > C}\\  
\label{E:lip-est-2}&\qquad\leq \frac{C'+\E{\sup_{y\in \widehat{T}} \abs{W_1\cdot (y-y_0)}^{2k}+ \sup_{\xi\in \twiddle{T}} \abs{W_1\cdot (\xi-\xi_0)}^4}}{C-1},
\end{align}
where $\xi_0\in \twiddle{T}$ is any fixed point. Note that by Lemma \ref{L:lip-image}, the sets $\widehat{T},\twiddle{T}$ are both contained in the image of a compact subset $T'\subseteq \R^{3n_0+1}$ under a Lipschitz map, with Lipschitz constant depending only on $\lambda,  T$. Thus, an application of Lemma \ref{L:lip-image} shows that there exists a constant $C''$ depending only $\lambda,T,k$ so that
\[
\E{\sup_{y\in \widehat{T}} \abs{W_1\cdot (y-y_0)}^{2k}}+\E{\sup_{\xi\in \twiddle{T}} \abs{W_1\cdot(\xi-\xi_0)}^4}\leq C''.
\]
Substituting this into \eqref{E:lip-est-2} and taking $C$ sufficiently large completes the proof of Lemma \ref{L:lip-image}.
\end{proof}

Lemma \ref{L:layer-lip} directly yields the equi-Lipschitz estimate \eqref{E:equi-lip}. Indeed, let us fix $\epsilon \in (0,1)$ and a compact set $T\subseteq\R^{n_0}$. Let us define
\[
h_\alpha^{(\ell)}:=\begin{cases}
\sqrt{\frac{C_W}{n_0}} x_\alpha,&\qquad \ell=0\\
\sqrt{\frac{C_W}{n_\ell}} \sigma(z_\alpha^{(\ell)}),&\quad \ell=1,\ldots, L\\
\frac{1}{\sqrt{n_{L+1}}} z_{\alpha}^{(L+1)},&\qquad \ell = L+1
\end{cases}.
\]
For $\ell=1,\ldots, L+1$ we have
\[
h_\alpha^{(\ell)} =\begin{cases} \sqrt{\frac{C_W}{n_\ell}}\sigma\lr{\widehat{W}^{(\ell)}h_\alpha^{(\ell-1)}+b^{(\ell)}},&\qquad \ell=1,\ldots, L\\
\frac{1}{\sqrt{n_{L+1}}}\lr{\widehat{W}^{(L+1)}h_\alpha^{(L)}+b^{(L+1)}},&\qquad \ell = L+1
\end{cases}
\]
where the rescaled weight matrices $\widehat{W}^{(\ell)}$, defined in \eqref{E:W-def}, has iid mean $0$ variance $1$ entries with finite higher moments. To each of the transformations $h_\alpha^{(\ell)}\mapsto h_\alpha^{(\ell+1)}$ we may now apply Lemma \ref{L:layer-lip}. Specifically, applying Lemma \ref{L:layer-lip} in the first layer shows that there exists $C^{(1)}>0$ so that the rescaled first layer map
\[
\sup_{x_\alpha,x_\beta\in T}\frac{\norm{h_\alpha^{(1)}-h_\beta^{(1)}}}{\norm{x_\alpha-x_\beta}}\leq C^{(1)}
\]
with probability at least $1-\epsilon/(L+1)$. Thus, the image
\[
T^{(1)}:=h^{(1)}(T)
\]
of $T$ under the normalized first layer map is the image under a $C^{(1)}-$Lipschitz map of the compact set $T\subseteq \R^{n_0}$. This allows us to apply Lemma \ref{L:layer-lip} again, but this time to the second layer, to conclude that, again, there exist $C^{(2)}>0$ so that with probability at least $1-2\epsilon/(L+1)$ the normalized second layer map satisfies
\[
\sup_{x_\alpha,x_\beta\in T}\frac{\norm{h_\alpha^{(2)}-h_\beta^{(2)}}}{\norm{x_\alpha-x_\beta}}\leq\sup_{x_\alpha,x_\beta\in T}\frac{\norm{h_\alpha^{(2)}-h_\beta^{(2)}}}{\norm{h_\alpha^{(1)}-h_\beta^{(1)}}} \sup_{x_\alpha,x_\beta\in T}\frac{\norm{h_\alpha^{(1)}-h_\beta^{(1)}}}{\norm{x_\alpha-x_\beta}} \leq C^{(1)}C^{(2)}.
\]
Proceeding in this way, with probability at least $1-\epsilon$ we that
\[
\sup_{x_\alpha,x_\beta\in T}\frac{\norm{z_\alpha^{(L+1)}-z_\beta^{(L+1)}}}{\norm{x_\alpha-x_\beta}}=n_{L+1}^{1/2}\sup_{x_\alpha,x_\beta\in K}\frac{\norm{h_\alpha^{(L+1)}-h_\beta^{(L+1)}}}{\norm{x_\alpha-x_\beta}}\leq n_{L+1}^{1/2} C^{(1)}\cdots C^{(L+1)}.
\]
Since $n_{L+1}$ is fixed and finite, this confirms \eqref{E:equi-lip}. It remains to check the uniform boundedness condition in \eqref{E:aa-hyp}. For this note that for any fixed $x_\beta\in K$ by Lemma \ref{L:collective-properties}, we have 
\begin{align*}
\sup_{n_1,\ldots, n_L\geq 1}\E{\frac{1}{n_{L+1}}\norm{z_{\beta}^{(L+1)}}^2}<\infty.
\end{align*}
Thus, by Markov's inequality, $\norm{z_\beta^{(L+1)}}$ is bounded above with high probability. Combined with the equi-Lipschitz condition $\eqref{E:equi-lip}$, which we just saw holds with high probability on $K$, we conclude that for each $\epsilon>0$ there exists $C>0$ so that
\[
\mathbb P\lr{\sup_{x_\alpha\in K}\norm{z_\alpha^{(L+1)}}\leq C}\geq 1-\epsilon,
\]
as desired. \hfill $\square$

\bibliography{bibliography}{}
\bibliographystyle{plain}
\end{document}